\newtheorem{definition}{Definition}
\newtheorem{thm}{Theorem}
\newtheorem{lem}{ \bf Lemma}
\newtheorem{cor}{\bf Corollary}
\newtheorem{rem}{\bf Remark}
\newcommand{\be}{\begin{equation}}
\newcommand{\ee}{\end{equation}}
\newcommand{\Bea}{\begin{eqnarray*}}
\newcommand{\Eea}{\end{eqnarray*}}
\newcommand{\bea}{\begin{eqnarray}}
\newcommand{\eea}{\end{eqnarray}}
\numberwithin{equation}{section}
\def\dg{{\delta}_g}
\def\Rt{{\check{R}}}
\def\la{\Delta}
\def\Rc{\stackrel{\circ}{R}}
\begin{document}
\title[Stability of Quadratic curvature Functionals]{Stability of Quadratic curvature Functionals at product of Einstein manifolds}

\author{Atreyee Bhattacharya and Soma Maity}
\address{Indian Institute of Science Education and Research, Bhopal, India}
\email{atreyee@iiserb.ac.in}
\address{Indian Institute of Science Education and Research, Mohali, India}
\email{soma123maity@gmail.com}

\subjclass[2010]{Primary 53C21, 58E11}

\begin{abstract}
Consider Riemannian functionals defined by $L^2$-norms of Ricci curvature, scalar curvature, Weyl curvature and Riemannian curvature. Rigidity, stability and local minimizing properties of Eistein metrics as critical metrics of these quadratic functionals have been studied in \cite{GV}. In this paper, we study the same for products of Einstein metrics with Einstein constants of possibly opposite signs. In particular, we prove that the product of a spherical space form and a compact hyperbolic manifold is unstable for certain quadratic functionals if the first eigenvalue of the Laplacian of the hyperbolic manifold is sufficiently small. We also prove the stability of $L^{\frac{n}{2}}$-norm of Weyl curvature at compact quotients of $S^n\times \mathbb{H}^m$.
\end{abstract}

\keywords{Riemannian functionals, critical metrics, stability, local minima}

\maketitle
\section{Introduction}
Let $M^n$ be a closed smooth manifold of dimension $n\geq 3$. We consider quadratic Riemannian functionals $\mathcal{R},$ $\mathcal{R}ic,$ $\mathcal{S}$ and $\mathcal{W}_2$ defined on the space of Riemannian metrics on $M$ by
\Bea
\mathcal{R}(g)= \int_M |R_g|^2 dv_g,\ \ \mathcal{R}ic(g)= \int_M |r_g|^2 dv_g, \ \ \mathcal{S}(g)= \int_M s_g^2 dv_g, \ \ \mathcal{W}_2(g)=\int_M |W|^2dv_g 
\Eea
where $R_g$, $r_g$, $s_g$ and $W$ denote the Riemannian curvature, Ricci curvature , scalar curvature and Weyl curvature of the Riemannian manifold $(M,g)$ respectively. If $(M,g)$ is not the standard sphere,  the space of symmetric two tensors $S^2M$ on $M$ admits the following orthogonal decomposition (see Lemma (4.57) in \cite{BE}).
\be
 \label{eqn:decomposition} S^2M=Im\dg^*\oplus C^{\infty}Mg\oplus (\dg^{-1}(0)\cap tr^{-1}(0))
\ee
As the Riemannian functionals above are invariant under the action of the diffeomorphism group of $M$ on the space of Riemannian metrics, gradients of these functionals have no component along $Im \dg^*.$ Also, since these functionals are not scale invariant, we restrict them to the space of Riemannian metrics with constant volume. 
\begin{definition}
Let $(M,g)$ be a closed Riemannian manifold which is not the standard sphere. If $g$ is a critical metric of a Riemannian functional $\mathcal{F}$ then $g$ is called stable for $\mathcal{F}$ if there exists a constant $C>0$ such that 
\be \label{eqn:hessian}
H_F(h,h)\geq C\|h\|^2 \ \ {\rm for \ all} \ h\in \{fg: f\in C^{\infty}M \text{ and }\int_Mfdv_g=0\}\oplus\dg^{-1}(0)\cap tr^{-1}(0)\ee
where $H_F$ denotes the Hessian of $\mathcal{F}$ at $g.$
\end{definition}
Riemannian functionals and their critical points have been topics of interest both in Riemannian geometry and physics. A complete classification of irreducible symmetric spaces of compact type as stable critical points of the total scalar curvature functional is available due to N. Koiso (see \cite{KN1}, \cite{KN2}, \cite{KN3}). Besson, Courtois and Gallois proved that compact quotients of rank one symmetric spaces of non-compact type are global minima for the $L^{\frac{n}{2}} $-norm of scalar curvature (\cite{BCG}). In \cite{GV} Gursky and Viaclovsky studied rigidity, stability and local minimizing properties of Einstein metrics for the quadratic Riemannian functional $\mathcal{F}_t$ defined by
$$\mathcal{F}_t(g)=\mathcal{R}ic(g)+t\mathcal{S}(g), \ \ {\rm for} \ \ t\in \mathbb{R}.$$ 
In this paper, we focus on understanding stability of products of closed Einstein manifolds of the curvature functionals $\mathcal{R},$ $\mathcal{R}ic,$ $\mathcal{F}_t$ and $\mathcal{W}_2$.
  
Let $(M,g)$ be the product of closed Einstein manifolds $(M_0,g_0)$ and $(M_1,g_1)$ with respective Einstein constants $\lambda_0$ and $\lambda_1.$  Then $(M,g)$ is a critical metric of $\mathcal{R}ic$ if and only if $|\lambda_0|=|\lambda_1|.$ This condition can always be achieved after suitable rescaling of $g_1$ or $g_2$, provided both $\lambda_1,\lambda_2\neq 0$. Let $\mathcal{M}_W$ denote the space of doubly warped product metrics on $M$ with constant volume. The warped product variations play an important role in determining the stability of $\mathcal{R}ic$ at products of Einstein metrics. The tangent space of $\mathcal{M}_W$ at $g$ is given by $(C^{\infty}M_0.g_1+C^{\infty}M_1.g_0)$. If $g$ is a critical metric of $\mathcal{R}ic$ and $H_{Ric}$ restricted to $(C^{\infty}M_0.g_1+C^{\infty}M_1.g_0)$ satisfies (\ref{eqn:hessian}), then we say that $g$ is stable for $\mathcal{R}ic$ restricted to $\mathcal{M}_W$.

\begin{thm} \label{main1} Let $(M,g)$ be the product of two closed Einstein manifolds $(M_0,g_0)$ and $(M_1,g_1)$ with respective dimensions $n_0,n_1\geq 3$, Einstein constants $\lambda_0$, $\lambda_1$ and first eigenvalues of the Laplacian $\mu_0$, $\mu_1$. $(M,g)$ is stable for $\mathcal{R}ic$ restricted to $\mathcal{M}_W$ if and only if one of the following conditions is satisfied.
\item
(i) $ \lambda_0=\lambda_1>0$.
\item 
(ii) 
$ \lambda_0=\lambda_1<0$ and for each $j\in \{0,1\}$ either $ n_j =3,4$ or $\frac{\mu_j}{|\lambda_j|}>c(n_{[j+1]})$  where  $c(a)=\frac{a-6+\sqrt{9a^2-36a+4}}{2(a+1)}$ and $[j]=j \mod 2$.
\item
(ii) $\lambda_0=-\lambda_1>0$ and either $ n_0 =3$, or $\frac{\mu_1}{|\lambda_1|} >  \frac{(n_0+2)+\sqrt{9n_0^2-20n_0-28}}{2(n_0+1)}$.
\end{thm}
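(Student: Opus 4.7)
The strategy is to evaluate the Hessian $H_{Ric}$ on the finite algebraic slice $T_g\mathcal{M}_W = C^\infty(M_0)\cdot g_1 + C^\infty(M_1)\cdot g_0$ and reduce stability to the positivity of an explicit family of polynomials in Laplace eigenvalues on the two factors. Write a generic tangent vector as $h = u g_1 + v g_0$ with $u\in C^\infty(M_0)$ and $v\in C^\infty(M_1)$. The constant-volume constraint $\int_M \mathrm{tr}_g h\, dv_g = 0$ gives the single linear relation $n_1 \bar u\, V_{g_0} + n_0 \bar v\, V_{g_1} = 0$ between the means of $u$ and $v$, so the residual one-parameter constant mode is handled separately from the oscillating modes.

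First, I would substitute this ansatz into the second-variation formula for $\mathcal{R}ic$ at an Einstein critical metric (as in Besse, or in \cite{GV}). On the product, the Ricci tensor, full curvature tensor, and Laplace operator split along the two factors, and the criticality condition $|\lambda_0| = |\lambda_1|$ eliminates any first-order contributions. The computation should collapse to
\[
H_{Ric}(h,h) = \int_M \bigl(P_0(u,\Delta u) + P_1(v,\Delta v) + Q(u,v)\bigr)\, dv_g,
\]
where each $P_j$ is quadratic in the corresponding variable and its derivatives (with coefficients polynomial in $n_0, n_1, \lambda_0, \lambda_1$), and $Q$ is a bilinear coupling term in $u$ and $v$.

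Next, expand $u = \sum_k a_k \phi_{0,k}$ and $v = \sum_l b_l \phi_{1,l}$ in $L^2$-orthonormal Laplace eigenfunction bases with eigenvalues $\mu_{0,k}, \mu_{1,l} \ge 0$. Since $\int_M \phi_{0,k}\phi_{1,l}\, dv_g$ vanishes unless both factors are constant, the cross term $Q$ decouples from nonconstant modes, and $H_{Ric}(h,h)$ block-diagonalizes into a scalar quadratic form $a_k^2\, p(\mu_{0,k}; n_0, n_1, \lambda_0, \lambda_1)$ on each nonconstant $M_0$-eigenfunction, symmetrically on $M_1$, plus a finite-dimensional block on the constant modes that reduces to zero via the volume constraint. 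Stability is thus equivalent to $p(\mu;\,\cdots) > 0$ for every positive eigenvalue $\mu$ on the appropriate factor; since $p$ is quadratic in $\mu$ with positive leading coefficient, this reduces to a threshold condition on the smallest eigenvalue $\mu_j$.

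Finally, I would analyze the roots of $p(\mu)$ case by case. In (i), $\lambda_0 = \lambda_1 > 0$, the polynomial has no positive roots and $p > 0$ holds unconditionally. In (ii), $\lambda_0 = \lambda_1 < 0$, a direct discriminant analysis identifies the largest positive root as $|\lambda_j|\, c(n_{[j+1]})$, so the threshold $\mu_j/|\lambda_j| > c(n_{[j+1]})$ is necessary and sufficient, except when $n_{[j+1]} \in \{3,4\}$, where the discriminant $9a^2 - 36a + 4$ is negative (at $a=3$) or the root collapses to zero (at $a=4$) and $p > 0$ holds automatically. Case (iii) produces a different quadratic with largest root $\frac{(n_0+2) + \sqrt{9n_0^2 - 20n_0 - 28}}{2(n_0+1)}$, exceptional at $n_0=3$ by the same discriminant mechanism. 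Necessity in each case follows by testing $H_{Ric}$ on $h$ supported on the first nontrivial eigenfunction. The principal obstacle is the Hessian computation itself---tracking all contributions from the Lichnerowicz Laplacian, the $|r|^2$ term, and the cross-curvature couplings on the product under the warped ansatz $h = ug_1 + vg_0$---and verifying the cancellations that produce $p$ with exactly these coefficients; after that, the root analysis is mechanical.
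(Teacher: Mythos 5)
Your plan follows essentially the same route as the paper: evaluate $H_{Ric}$ on the slice $C^\infty(M_0)\cdot g_1+C^\infty(M_1)\cdot g_0$, observe that the two blocks decouple ($H(f_0g_1,f_1g_0)=0$), expand in Laplace eigenfunctions, and reduce stability to positivity of a quadratic in $x=\mu/|\lambda|$; the thresholds you name in cases (ii) and (iii) are exactly the paper's polynomials $p(a,x)=(a+1)x^2-(a-6)x-2(a-4)$ (discriminant $9a^2-36a+4$, negative at $a=3$, root $0$ at $a=4$) and $(n_0+1)x^2-(n_0+2)x-2(n_0-4)$ (discriminant $9n_0^2-20n_0-28$, negative at $n_0=3$), obtained in the paper from Lemma \ref{lem6}.

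There is, however, a concrete gap in your treatment of the blocks involving a positive Einstein factor. In case (i), $\lambda_0=\lambda_1=\lambda>0$, the relevant quadratic is $(n_i+1)x^2+(n_i-6)x-2(n_i-4)$, whose constant term is negative whenever $n_i>4$, so it \emph{does} have a positive root (for $n_i=5$ it is $x=2/3$); your claim that it ``has no positive roots and $p>0$ holds unconditionally'' is false, and positivity over the spectrum is not automatic. What saves this case is the lower bound $\mu>\lambda$ for the first eigenvalue on a positive Einstein manifold (the B\"ochner argument \eqref{eqn-BW}, or Lichnerowicz--Obata), which forces $x>1$ while the quadratic is already positive and increasing at $x=1$. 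The same estimate is indispensable in case (iii) for the block $u\,g_1$ with $u\in C^\infty(M_0)$, where the quadratic $(n_1+1)x^2+(n_1+2)x-2(n_1-4)$ again has a positive root for $n_1>4$; you do not address this block at all, only the $n_0$-quadratic governing $\mu_1$. Without the eigenvalue bound on the positive factor, the ``if'' direction of (i) and (iii) does not go through. A smaller inaccuracy: the volume constraint does not make the constant-mode block ``reduce to zero''; it kills only the pure-trace constant direction and leaves the one-dimensional span of $n_1g_0-n_0g_1$ (a parallel TT tensor), which the paper sidesteps by working with mean-zero $f$ and assigning that direction to the TT part of the decomposition \eqref{eqn:decomposition2} — you should either do the same or evaluate the Hessian on it explicitly.
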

If $(M_0,g_0)$ and $(M_1,g_1)$ are stable for $\mathcal{R}ic$, then we prove that $H_{Ric}$  restricted to TT-tensors on $M$ satisfies the condition for stability stated in (\ref{eqn:hessian}). Consequently, we have the following theorem.
\begin{thm} \label{main2}  Let $(M,g)$ be the product of two closed Einstein manifolds $(M_0,g_0)$ and $(M_1,g_1)$ with respective dimensions $n_0,n_1\geq 3$. If $(M_0,g_0)$ and $(M_1,g_1)$ are stable for $\mathcal{R}ic$ then $(M,g)$ is stable if and only if $(M,g)$ is stable for $\mathcal{R}ic$ restricted to $\mathcal{M}_W$.
\end{thm}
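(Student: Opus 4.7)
The claim in the paragraph preceding Theorem \ref{main2}---that individual $\mathcal{R}ic$-stability of $(M_0,g_0)$ and $(M_1,g_1)$ implies coercivity of $H_{Ric}$ on $\dg^{-1}(0)\cap tr^{-1}(0)$---is taken as given. It then remains to relate the conformal block $V_1:=\{fg : \int_M f\,dv_g=0\}$ of the stability domain $V=V_1\oplus(\dg^{-1}(0)\cap tr^{-1}(0))$ to the warped-product tangent space $W_0=C^\infty M_0\cdot g_1+C^\infty M_1\cdot g_0$ intersected with the constant-volume hyperplane.

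For the ``only if'' direction, take $h=f_0 g_1+f_1 g_0\in W_0$. Because $g_0,g_1$ are parallel on the Riemannian product and $\nabla f_j$ lies in the $M_j$-factor, the identity $\dg(f_j g_{1-j})=-g_{1-j}(\nabla f_j,\cdot)=0$ shows $\dg h=0$. In the decomposition (\ref{eqn:decomposition}) we therefore have $h=\phi g+k^{TT}+\dg^*\omega$ with $\phi=(n_1 f_0+n_0 f_1)/(n_0+n_1)$; the constant-volume condition on $\mathcal{M}_W$ forces $\int_M\phi\,dv_g=0$, so $\phi g+k^{TT}\in V$. Diffeomorphism invariance of $\mathcal{R}ic$ gives $H_{Ric}(h,h)=H_{Ric}(\phi g+k^{TT},\phi g+k^{TT})$, and combining this with a norm comparison $\|h\|^2\le C\,\|\phi g+k^{TT}\|^2$ (derived from the explicit formulas for $\phi$ and $k^{TT}$ together with the elliptic estimate for $\dg\dg^*$ controlling $\|\omega\|$ by $\|d\phi\|$) transfers the full-stability bound to $W_0$.

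The ``if'' direction is treated by a Schur-complement argument on $V=V_1\oplus V_2$ with $V_2=\dg^{-1}(0)\cap tr^{-1}(0)$. Since $H_{Ric}|_{V_2}$ is coercive, it suffices to produce coercivity of the induced quadratic form on $V_1$ after absorbing the $V_1$-$V_2$ cross terms by Cauchy--Schwarz against the two block lower bounds. Decomposing $f\in C^\infty M$ (with $\int f\,dv_g=0$) spectrally in the product structure $M=M_0\times M_1$, the ``axial'' modes of $f$ (those depending on only one factor) yield conformal variations that differ from $\mathcal{M}_W$-tangent vectors by elements of $\mathrm{Im}\,\dg^*\oplus V_2$---this is precisely the calculation of the ``only if'' direction read in reverse---and so are controlled by $\mathcal{M}_W$-stability. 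The ``mixed'' Laplace modes (with both factor-averages zero) are handled by a direct spectral estimate on $H_{Ric}(fg,fg)$ at the product critical metric, where both factors' Laplace eigenvalues enter multiplicatively, giving strict positivity for all nonzero mixed modes.

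The main obstacle is this last spectral estimate: because the product metric need not itself be Einstein (one has only $|\lambda_0|=|\lambda_1|$ for criticality of $\mathcal{R}ic$), the formula for $H_{Ric}$ on pure conformal variations is more intricate than in the classical Einstein setting, and one must track all lower-order curvature terms carefully, in the spirit of the eigenvalue computations underlying Theorem \ref{main1}. A secondary technical point is the uniform propagation of coercivity constants through the $\mathrm{Im}\,\dg^*$ projection used in both directions, which requires the standard injectivity of $V\oplus W_0\hookrightarrow S^2M/\mathrm{Im}\,\dg^*$ in the absence of Killing and conformal Killing fields on $(M,g)$.
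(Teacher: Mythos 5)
Your ``if'' direction has a genuine gap in the way you dispose of the conformal block: outside the TT part you never use the hypothesis that $(M_0,g_0)$ and $(M_1,g_1)$ are stable, and that cannot be right. For an axial mode $f\in C^{\infty}M_0$ with $\int_{M_0}f=0$ one has $fg=fg_0+fg_1$; the warped summand $fg_1$ is indeed tangent to $\mathcal{M}_W$, but the leftover factor-conformal summand $fg_0$ is in general \emph{not} an element of $Im\,\dg^*\oplus(\dg^{-1}(0)\cap tr^{-1}(0))$: for a $\mu$-eigenfunction $f$ on the Einstein factor the traceless correction $fg_0+\frac{n_0}{\mu}Ddf$ has divergence proportional to $\bigl(n_0-1-\frac{n_0\lambda_0}{\mu}\bigr)df$, which vanishes only for special $\mu$, so ``the only-if calculation read in reverse'' does not make this piece disappear. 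These summands $f_ig_i$ are exactly where the paper invokes the factor-stability hypothesis, namely $H(fg_i,fg_i)\geq C_i\|fg_i\|^2$ for $f\in C^{\infty}M_i$, $\int_{M_i}f=0$; an argument that bypasses this would prove that TT-coercivity plus $\mathcal{M}_W$-stability alone give full stability, which is false when a factor is conformally unstable (e.g.\ a hyperbolic factor with small first eigenvalue), since $f_ig_i$ differs from an element of the stability domain only by $Im\,\dg^*$, on which $H$ vanishes. There is also a logical defect in the same step: even when an axial direction differs from some $h\in W_0$ by an element of $Im\,\dg^*\oplus V_2$, coercivity of $H$ on $W_0$ and on $V_2$ does not pass to the conformal component of $h$, because in $H(h,h)=H(\phi g+k^{TT},\phi g+k^{TT})$ the positivity may be carried entirely by $H(k^{TT},k^{TT})$ unless the cross term $H(\phi g,k^{TT})$ and $\|k^{TT}\|$ are controlled; this inequality is usable only in the ``only if'' direction (where your $Im\,\dg^*$ argument is essentially sound and more explicit than the paper, modulo the point that $\phi$ is not exactly $(n_1f_0+n_0f_1)/n$, since $\dg^*\omega$ also contributes to the trace).

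The second gap is that the step you yourself call the main obstacle --- positivity of $H(fg,fg)$ for $f$ with both factor averages zero --- is only announced, not proved: no second-variation formula for $fg$ with general $f\in C^{\infty}M$ is derived (Lemma \ref{lem6} covers only $f$ pulled back from one factor), and the asserted ``multiplicative'' positivity is precisely what would need checking, in particular when $\lambda_0=-\lambda_1$. Likewise, absorbing the $V_1$--$V_2$ cross terms by Cauchy--Schwarz presupposes a smallness bound you never establish; the paper instead asserts that the decomposition is $H$-orthogonal so these terms vanish. For comparison, the paper's route through the conformal block is different and short: cross terms vanish, $f_ig_{i+1}$ is handled by Theorem \ref{main1}, $f_ig_i$ by the factor-stability hypothesis, and general conformal factors are reduced to sums $f_0+f_1$ by a density assertion. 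You rightly sensed that the passage from such sums to general $f\in C^{\infty}M$ needs an argument, but your plan neither follows the paper's reduction nor supplies the mixed-mode estimate it would replace, so as it stands the proof is incomplete at its central point and incorrect in its treatment of the axial modes.
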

For instance, the sphere ($S^n$), complex projective space ($\mathbb{C}P^n$) and $S^n\times S^n$ with standard Riemannian metrics are stable for $\mathcal{R}ic$ \cite{GV}. As a consequence of the above theorem $S^n\times \mathbb{C}P^m$ , product of finitly many $S^n$ and  $\mathbb{C}P^m$ possibly with different dimensions are stable for $\mathcal{R}ic$. A compact hyperbolic manifold $H^n$ is stable for $\mathcal{R}ic$ if, the first eigenvalue of the laplacian $\mu>\frac{2(n-4)(n-1)}{n}$ (see Remark 7.4, \cite{GV}). 
\begin{thm}\label{SnHn} Let $(M,g)$ be the product of $S^n$ and a compact hyperbolic manifold $H^m$, then $(M,g)$ is stable for $\mathcal{R}ic$ if and only if either $m=3$ or the first eigenvalue of the Laplacian $\mu$ of $H^m$ satisfies
$$\frac{\mu}{m-1}>\max \left\{\frac{2(m-4)}{m}, \frac{(n+2)+\sqrt{9n^2-20n-28}}{2(n+1)}\right \}$$
\end{thm}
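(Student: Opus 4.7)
My strategy is to deduce Theorem \ref{SnHn} directly from Theorem \ref{main2}, case (iii) of Theorem \ref{main1}, and the Gursky--Viaclovsky stability criterion for compact hyperbolic manifolds. Since $S^n$ has Einstein constant $n-1>0$ and $H^m$ has Einstein constant $-(m-1)<0$, I would first rescale the hyperbolic factor so that $\lambda_0=-\lambda_1>0$, placing $(M,g)$ into case (iii) of Theorem \ref{main1}; observe that the ratio $\mu_1/|\lambda_1|=\mu/(m-1)$ is invariant under this rescaling, so all conclusions can be stated in terms of the original $\mu$.

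By Theorem \ref{main2}, stability of $(M,g)$ for $\mathcal{R}ic$ is equivalent to the conjunction of (a) individual stability of each factor for $\mathcal{R}ic$, and (b) stability of $(M,g)$ for $\mathcal{R}ic$ restricted to the space $\mathcal{M}_W$ of doubly warped product metrics. For (a), the round sphere $S^n$ is always stable by \cite{GV}, while Remark 7.4 of \cite{GV} gives stability of $H^m$ precisely when $\frac{\mu}{m-1}>\frac{2(m-4)}{m}$; this contributes the first term inside the maximum in the statement, and is vacuously satisfied when $m$ is small. For (b), Theorem \ref{main1}(iii) with $n_0=n$, $|\lambda_1|=m-1$ and $\mu_1=\mu$ yields warped product stability precisely when either the exceptional low-dimensional case holds or $\frac{\mu}{m-1}>\frac{(n+2)+\sqrt{9n^2-20n-28}}{2(n+1)}$; this contributes the second term.

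The two lower bounds on $\frac{\mu}{m-1}$ must hold simultaneously, which is equivalent to $\frac{\mu}{m-1}$ exceeding their maximum, yielding the main inequality in the theorem. The exceptional case $m=3$ records the regime in which both bounds become automatic: the Gursky--Viaclovsky bound is negative and hence vacuous, and the warped product stability falls under the corresponding low-dimensional exceptional case of Theorem \ref{main1}(iii). The main delicate point I foresee is purely bookkeeping: carefully identifying the ``$m=3$'' exception with the simultaneous vacuity of both sub-conditions, and confirming that the two bounds combine into the single maximum as stated. I do not anticipate needing any new technical ingredient beyond the already-established Theorems \ref{main1} and \ref{main2} together with the cited Gursky--Viaclovsky criterion.
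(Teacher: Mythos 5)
Your overall route is the same as the paper's: Theorem \ref{SnHn} is given no separate proof there and is intended exactly as the combination of Theorem \ref{main1}(iii), Theorem \ref{main2} and the Gursky--Viaclovsky criteria for the two factors, so the architecture of your argument matches. The genuine gap is in your final bookkeeping step, and it cannot be repaired as written: the low-dimensional exception in Theorem \ref{main1}(iii) is $n_0=3$, i.e.\ (with your own substitution $n_0=n$, $\lambda_0=-\lambda_1>0$) the \emph{sphere} dimension equal to $3$, not the hyperbolic dimension. The dimension $m$ of $H^m$ never enters the warped-product condition: by Lemma \ref{lem6}, for $h=fg_{S^n}$ with $f$ a $\mu$-eigenfunction on $H^m$ the Hessian equals $\frac{n(n+1)}{2}\mu^2-\frac{\lambda n(n+2)}{2}\mu-\lambda^2 n(n-4)$ (per unit $\|f\|^2$), which for $n\geq 4$ is negative whenever $0<\mu/\lambda<\frac{(n+2)+\sqrt{9n^2-20n-28}}{2(n+1)}$, regardless of $m$. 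So your claim that ``$m=3$'' places the warped-product directions in the exceptional case of Theorem \ref{main1}(iii) is false. Concretely, take $m=3$, $n\geq 4$ large, and a compact hyperbolic $3$-manifold with $\mu$ small (such manifolds exist, cf.\ \cite{RS}): both factors are stable, yet Theorem \ref{main1}(iii) says the product is not stable restricted to $\mathcal{M}_W$, hence by Theorem \ref{main2} it is unstable. Thus the ``$m=3$'' clause cannot be deduced from Theorems \ref{main1} and \ref{main2}; it is in fact inconsistent with the paper's own discussion immediately after Theorem \ref{SnHn} (instability when $\frac{2(m-4)}{m}<\frac{\mu}{m-1}<\frac{(n+2)+\sqrt{9n^2-20n-28}}{2(n+1)}$, which includes $m=3$) and with item (6) of the list of stable examples, and is evidently a slip for the ``$n=3$'' exception of Theorem \ref{main1}(iii). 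Your proposal, by trying to justify the clause literally, inherits this error rather than flagging it.

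Two further points to fix if you redo the deduction with the correct exception. First, even when $n=3$ the hyperbolic-factor condition does not disappear: for $m>4$ one still needs $\frac{\mu}{m-1}>\frac{2(m-4)}{m}$, so the correct characterization is the conjunction of the factor condition and the warped-product condition, not ``either $n=3$ or the max inequality.'' Second, Theorem \ref{main2} as stated gives the equivalence ``product stable iff stable on $\mathcal{M}_W$'' only under the hypothesis that both factors are stable; your asserted biconditional ``product stable iff (each factor stable) and (stable on $\mathcal{M}_W$)'' additionally uses that instability of a factor forces instability of the product. That is true but needs the observation, implicit in the proofs of Theorems \ref{Ric:TT} and \ref{main2}, that a destabilizing direction on a factor (a TT-tensor on $M_i$, or a conformal direction $fg_i$ with $f\in C^{\infty}M_i$, $\int_{M_i}f=0$) lifts to an admissible variation of the product on which the Hessian is comparable; you should state and use this explicitly rather than citing Theorem \ref{main2} alone.
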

 
In particular, if $\mu>2(m-1)$, then $S^n\times H^m$ is always stable for $\mathcal{R}ic.$ A compact hyperbolic manifold with dimension $4$ is always stable for $\mathcal{R}ic$ (Remark 7.4, \cite{GV}). The above theorem implies that  
if $\mu< 6$ and $n$ sufficiently large then $S^n\times H^4$ is not stable for $\mathcal{R}ic.$ More generally, if $M=S^n\times H^m$ and
$$\frac{2(m-4)}{m}<\frac{\mu}{m-1}<\frac{(n+2)+\sqrt{9n^2-20n-28}}{2(n+1)}$$
then $S^n$ and $H^m$ both are individually stable for $\mathcal{R}ic$ but the product $S^n\times H^m$ is not stable for $\mathcal{R}ic.$ Similar results also hold for products of $\mathbb{C}P^n$ and compact quotients of its non-compact dual.

We also prove similar stability criteria for critical metrics of $\mathcal{F}_t$ those are products of Einstein metrics with Einstein constants of opposite signs (see Theorem \ref{st-F_t-opposite} for details) thereby generalizing the work of Gursky and Viaclovsky \cite{GV}. Due to Chern-Gauss-Bonnet theorem $\mathcal{W}_2$ is equivalent to $\mathcal{F}_{-\frac{1}{3}}$ in dimension $4$. Hence rigidity or stability of $\mathcal{F}_{-\frac{1}{3}}$ implies rigidity or stability of Bach flat metrics in this case \cite{GV}. 
Since $\mathcal{W}_2$ is not conformally invariant for $n>4$ we consider the conformally invariant Riemannian functional $\mathcal{W}_{\frac{n}{2}}$ defined by  
$$\mathcal{W}_{\frac{n}{2}}(g)= \int_M |W_g|^{\frac{n}{2}} dv_g. $$
Stability of $S^p\times S^q$ and $\mathbb{C}P^n$ for $\mathcal{W}_{\frac{n}{2}}$ has been studied in \cite{KO},  \cite{GLW}, \cite{GL}. We prove the following result. 
\begin{thm} \label{main3}
Let $(M^n,g)$ be a closed manifold with dimension $n\geq 6$. If the Riemannian universal cover of $(M,g)$ is the product of a sphere $S^k$ and a hyperbolic space $\mathbb{H}^{n-k}$ ($k,n-k\geq 3$) with the standard product metric, then $(M^n,g)$ is stable for $\mathcal{W}_{\frac{n}{2}}$.
\end{thm}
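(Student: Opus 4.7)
The plan is to exploit the conformal invariance of $\mathcal{W}_{\frac{n}{2}}$ for $n>4$ together with the product structure of the universal cover $\widetilde{M}=S^k\times\mathbb{H}^{n-k}$. Since $\mathcal{W}_{\frac{n}{2}}(e^{2tf}g)\equiv\mathcal{W}_{\frac{n}{2}}(g)$, differentiating twice at $t=0$ and using that $g$ is critical gives $H_{\mathcal{W}_{\frac{n}{2}}}(fg,fg)=0$ for every $f\in C^{\infty}M$. In view of the orthogonal decomposition (\ref{eqn:decomposition}), it therefore suffices to exhibit a strictly positive lower bound for $H_{\mathcal{W}_{\frac{n}{2}}}$ on the space of transverse-traceless (TT) tensors on $M$.

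Next I would lift the analysis to $\widetilde{M}$ and decompose TT-tensors relative to the product structure, so that every such tensor admits a spectral expansion indexed by pairs $(\mu_0,\mu_1)$ of Laplace eigenvalues on the two factors. Each joint eigenspace is spanned by a small number of model tensors: TT-tensors pulled back from either factor modulated by an eigenfunction on the other, together with a genuinely mixed component built from $T^*S^k\otimes T^*\mathbb{H}^{n-k}$. Equivariance under the deck group of $M$ selects the corresponding modes on the compact quotient.

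Because $\widetilde{M}$ is locally homogeneous, $|W_g|^2$ is a positive constant and $|W_g|^{\frac{n-4}{2}}$ pulls out of all integrals. The second variation of $\mathcal{W}_{\frac{n}{2}}$ then reduces, on TT-tensors, to
\be
H_{\mathcal{W}_{\frac{n}{2}}}(h,h)=c_n|W_g|^{\frac{n-4}{2}}\int_M\langle \mathcal{L}h,h\rangle\, dv_g
\ee
for a symmetric second-order operator $\mathcal{L}$ built from the Lichnerowicz Laplacian $\Delta_L$ and the algebraic action $\Rc$ of the curvature tensor of $g$. Restricted to each joint eigenspace, $\mathcal{L}$ becomes an explicit algebraic matrix in $\mu_0$, $\mu_1$ and the two Einstein constants $\lambda_0=k-1$, $\lambda_1=-(n-k-1)$. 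For pure sphere and pure hyperbolic blocks positivity can be read off from the single-factor spectral analysis, adapting techniques from \cite{KO}, \cite{GLW}, \cite{GL}, while on the remaining blocks the positive Laplace spectrum of $S^k$ on non-constant modes combines with the non-positivity of the hyperbolic sectional curvatures to dominate the zeroth-order coupling.

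The main obstacle will be controlling the genuinely mixed blocks. The Weyl tensor of $S^k\times\mathbb{H}^{n-k}$ is nonzero, and while its pure sphere and pure hyperbolic components act block-diagonally, the mismatch $\lambda_0\neq\lambda_1$ produces a nontrivial zeroth-order coupling of $\Rc$ on tensors in $T^*S^k\otimes T^*\mathbb{H}^{n-k}$. Establishing the required sign will rest on an explicit algebraic identity expressing $\Rc$ on such tensors in terms of $|W_g|^2$, $\lambda_0$ and $\lambda_1$, after which positivity of the corresponding small block matrix reduces to checking a discriminant uniform in the spectrum, yielding the constant $C>0$ required in (\ref{eqn:hessian}).
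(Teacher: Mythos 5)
There is a genuine gap, and it sits at the heart of your strategy: you assert that the Weyl tensor of $S^k\times\mathbb{H}^{n-k}$ is nonzero and that $|W_g|^2$ is a positive constant that ``pulls out of all integrals.'' This is false. For the standard product with sectional curvatures $+1$ and $-1$ the curvature tensor is $\tfrac12(g_0\wedge g_0-g_1\wedge g_1)$, and its Weyl part vanishes identically: the metric is locally conformally flat, $W_g\equiv 0$, and $\mathcal{W}_{\frac{n}{2}}(g)=0$ is the global minimum. Consequently your proposed reduction $H_{\mathcal{W}_{n/2}}(h,h)=c_n|W_g|^{\frac{n-4}{2}}\int_M\langle\mathcal{L}h,h\rangle\,dv_g$ produces the zero form for $n>4$ (the factor $|W_g|^{\frac{n-4}{2}}$ vanishes), so no positive constant $C$ can come out of it, and the later step in which the ``mixed block'' coupling is to be expressed ``in terms of $|W_g|^2$, $\lambda_0$ and $\lambda_1$'' rests on the same false premise. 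The degenerate behaviour of $\int|W|^{n/2}$ at a zero of $W$ is exactly the difficulty one must confront, and a purely second-variation computation of $\mathcal{W}_{\frac{n}{2}}$ at $g$ cannot by itself give the strict lower bound.

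The paper's route is different and you would need its main ingredient: because $g$ is conformally flat it is a global minimum of both $\mathcal{W}_2$ and $\mathcal{W}_{\frac{n}{2}}$, and strict stability is obtained by proving positivity of the Hessian of the \emph{quadratic} functional $\mathcal{W}_2$ on TT-tensors (Lemma \ref{lW}); this is done by reducing, as in Theorem \ref{Ric:TT}, to factor-tangent TT-tensors and to mixed tensors $h=\alpha_0\odot\alpha_1$, and then computing $\langle(\nabla\mathcal{W}_2)'(h),h\rangle_{L^2}$ from \eqref{weyl2} together with the first-variation formulae of Lemmas \ref{lem1}--\ref{lem4} and the B\"ochner eigenvalue bounds. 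The passage back to $\mathcal{W}_{\frac{n}{2}}$ uses the relation between the two Hessians at a common critical point together with the global-minimum property of conformally flat metrics, not a pointwise factor $|W|^{\frac{n-4}{2}}>0$. Your opening step (conformal invariance kills the conformal directions, so only TT-tensors matter) agrees with the paper, but the rest of your plan would have to be rebuilt around $W\equiv 0$ along these lines; as written it cannot be carried out.
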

Locally conformally flat manifolds are global minima for $\mathcal{W}_{\frac{n}{2}}$. Topology and rigidity of these metrics with non-negative curvature conditions have been studied in \cite{SY}, \cite{Zhu}. Whereas some examples of locally conformally flat metrics with negative ricci curvature are constructed in \cite{CH} using warped product metrics with constant curvature manifolds as base manifolds. Theorem \ref{main3} implies that $S^n\times H^m$ with standard product metric $g$ is a strict local minimizer for $\mathcal{W}_{\frac{n}{2}}$. Hence there exists a neighbourhood $\mathcal{U}$ of $g$ in $C^{k,\alpha}$-topology such that if $\tilde{g}\in \mathcal{U}$ is a locally conformally flat metric then $c\tilde{g}, (c>0)$ is isometric to a metric conformal to $g$. Here $k$ is sufficiently large. $S^n\times H^n$ is also stable for $\mathcal{R}_2$ when the first eigenvalue of the laplacian of $H^n$ is sufficiently large (see Section 5.3).
\section{Preliminaries}
In this section, we setup the notations and recall a few definitions to be followed throughout this paper. Let $(M^n,g)$ be a closed Riemannian manifold of dimension $n$. $\langle , \rangle$, $\langle , \rangle_{L^2}$  respectively denote the point-wise and global inner products induced by $g$ on tensors and $|.|$, $\|.\|$ be the corresponding norms. Consider an orthonormal frame $\{e_i\}$ around a point on $M$. 
 
$\Rt$ is a symmetric $2$-tensor defined by
$$\Rt(x,y)=\sum_{i,j,k} R(x,e_i,e_j,e_k)R(y,e_i,e_j,e_k).$$

The self-adjoint operator $\Rc: S^2M \to S^2M$ is given by
$$\Rc(h)(x,y):=\sum_{i,j} R(e_i,x,e_j,y)h(e_i,e_j).$$

The Kulkarni-Nomizu product of $h_1$, $h_2\in S^2M$ is defined as
$$h_1\wedge h_2(x,y,z,w)=\frac{1}{2}[h_1(x,z)h_2(y,w)+h_1(y,w)h_2(x,z)-h_1(x,w)h_2(y,z)-h_1(y,z)h_2(x,w)].$$
Let $D$ be the Riemannian connection on $(M,g)$ and $D^*$ its formal adjoint. The divergence operator $\dg: S^2(M) \to \Omega^1(M)$ is defined by
$\dg(h)(x)=-\sum_i D_{e_i}h(e_i,x).$ For $h\in S^2M$ define
$$ d^Dh(x,y,z)=D_xh(y,z)-D_yh(x,z).$$
$\dg^*$ and $\delta^D$ denote the adjoints of $\dg^*$ and $d^D$ respectively. Then from \cite{Ber} we have,
$${\delta}^Dd^Dh = 2D^*D h+r\circ h+h\circ r-2\Rc(h)-\dg^*\dg h.$$

$h\in S^2(M)$ is said to be a transverse traceless tensor (or a TT-tensor) if $\dg (h)=0$ and $tr(h)=0$. \\
The Lichnerowicz Laplacian defined on TT-tensors is given by
\Bea 
\la_L h =D^*D h+r\circ h+h\circ r-2\Rc(h).
\Eea
Let $\la$ denote the Hodge-Laplcian acting on differential forms. Then the  B\"ochner-Weitzenb\"ock formula for differential $1$-forms is given by
$$\la\alpha=D^*D\alpha+r(\alpha) \ \ \forall \alpha \in \Omega^1(M) \ \ {\rm where} \  \ \la f=-tr Ddf \ \ \forall f\in C^{\infty}M.$$
If $\delta_g\alpha=0$ and $(M,g)$ is Einstein, then B\"ochner-Weitzenb\"ock formula implies that,
\bea \label{eqn-BW}
\|d\alpha\|^2 = \langle \la \alpha , \alpha \rangle_{L^2} = \langle D^*D \alpha , \alpha \rangle_{L^2} + \lambda \|\alpha \|^2 
=\|D\alpha\|^2+\lambda \|\alpha\|^2 \eea
We conclude this section by stating some first variational formulae from \cite{BE} Theorem 1.174. Given a one-parameter family $g(t)$ of Riemannian metrics with $g'(0)=\frac{d}{dt}(g(t))|_{t=0} = h$, let $T'_g(h)=\frac{d}{dt}(T(t))|_{t=0}$, where $T(t)=T(g(t))$ is a tensor depending on $g(t)$. Then $D'_g(h), R'_g(h),r'_g(h)$ and $s_g'(h)$ are given by 
\bea\label{variation1}
\nonumber &(i)& C_h(x,y,z)= g(D'_g(h)(x,y),z)=\frac{1}{2}(D_xh(y,z)+D_yh(x,z)-D_zh(x,y))\\
\nonumber &(ii)& R'_g(h)(x,y,z,w)=\frac{1}{2}[D^2_{y,z}h(x,w)+D^2_{x,w}h(y,z)-D^2_{x,z}h(y,w)-D^2_{y,w}h(x,z)\\
\nonumber && \; \; \; \; \; \; \; \; \; \;\; \; \; \; \;\; \; \; \; \;\; \; \; \; \;\; \; \; \; \;\; \; \; \; \;\; \; \; \; \; \;\;\;\;\;\;  \; \; \; \; \;\;\;\;\;\; \; \; \;   
+ h(R(x,y,z),w) - h(R(x,y,w),z)]\\
\nonumber &(iii)& r'_g(h)= \frac{1}{2}(D^*D h+r\circ h+h\circ r-2\Rc(h)-2\dg^*\dg h-Dd(trh))\\
&(iv)& s'_g(h)=\la trh +\dg^2(h)-(r,h)
\eea
\section{Some variational formulae at products of Einstein manifolds}
In this section we compute first variational formulae for tensors associated to gradients of $\mathcal{R},\mathcal{R}ic$, $\mathcal{S}$ and $\mathcal{W}_2$ at products of Einstein manifolds. Gradients of these functionals restricted to the space of Riemannian metrics with constant volume are given below \cite{BE}.
\Bea
&&\nabla\mathcal{R}(g)=2{\delta}^Dd^Dr-2\Rt+\frac{1}{2}|R|^2g+(\frac{2}{n}-\frac{1}{2})\|R\|^2g,\\
&&\nabla\mathcal{R}ic(g)={\delta}^Dd^Dr -D^{*}Dr-2r\circ r+\frac{1}{2}(\la s)g+\frac{1}{2}|r|^2g+(\frac{2}{n}-\frac{1}{2})\|r\|^2g,\\
&&\nabla\mathcal{S}(g)=2Dds +2(\la s)g-2s.r+\frac{1}{2}|s|^2g+(\frac{2}{n}-\frac{1}{2})\|s\|^2g.
\Eea
From the decomposition of Riemannian curvature tensor $R= \frac{s}{n(n-1)}g\wedge g +\frac{2}{n-2}(r-\frac{s}{n}g)\wedge g +W$
\bea\label{Weyl1}
|W_g|^2= |R_g|^2- \frac{4}{(n-2)} \left(|r|^2 -\frac{s^2}{2(n-1)}\right).
\eea 
Consequently,
\bea\label{weyl2}
\nabla \mathcal{W}_2(g)=\nabla\mathcal{R}(g)- \frac{4}{n-2}\left(\nabla\mathcal{R}ic(g)- \frac{1}{2(n-1)}\nabla\mathcal{S}(g)\right).
\eea
Let $(M^{n_0+n_1},g)$ ($n_0+n_1 \geq 4$) be the product of closed  Einstein manifolds $(M_0^{n_0},g_0)$ and $(M_1^{n_1},g_1)$ with respective Einstein constants $\lambda_0$ and $\lambda_1$. Consider divergence free one forms $\alpha_i$ on $M_i$ such that $\|\alpha\|=1$ for each $i=0,1.$ Then $h=\alpha_0 \odot \alpha_1= \alpha_0\otimes \alpha_1 + \alpha_1 \otimes \alpha_0 \in S^2(M)$ survives only on mixed two planes. As 
$$\delta_g(h)= \delta_{g_0}(\alpha_0)\alpha_1 +\delta_{g_1}(\alpha_1)\alpha_0$$
$h$ is also divergence free. Hence $h$ is a TT-tensor on $M$. 
Let $f \in C^{\infty}M_{i+1}$ with $\int_{M_{i+1}}f=0$ for $i=0,1$ and indices are taken modulo 2. Then $\dg (fg_{i})=0$, $Tr_g(fg_i)=\int_{M}tr_g(fg_i)=0$ and the following identities hold
\bea \label{eqn-Df}
D(fg_i)=df\otimes g_i \ ,\  D^2(fg_i)=Ddf\otimes g_i \ ,\  D^*D (fg_i)=(\la f)g_i.
\eea
$\alpha_0\odot \alpha_1$ and $fg_i$ as described above satisfy following Lemmas. We use Einstein summation convention.  
\begin{lem}\label{lem1}
\Bea
&&(i) \ \langle ({\delta}^Dd^Dr)'(fg_i), fg_i \rangle_{L^2} =n_i(\|\la f\|^2-(\lambda_0+\lambda_1)\|df\|^2)\\
&&(ii)\ \langle ({\delta}^Dd^Dr)'(\alpha_0 \odot \alpha_1 ), \alpha_0\odot\alpha_1 \rangle_{L^2}=2(\| \la \alpha_0 \|^2 + \|\la \alpha_1 \|^2 +2 \langle \la \alpha_0,\alpha_0 \rangle \langle \la \alpha_1,\alpha_1 \rangle)\\
&& \hspace{2.4in}-\frac{5}{2}(\lambda_0+\lambda_1)(\langle \la \alpha_0,\alpha_0 \rangle+\langle \la \alpha_1,\alpha_1 \rangle)+4\lambda_0\lambda_1
\Eea
\end{lem}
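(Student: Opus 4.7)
The plan is to use the operator identity recalled in Section 2,
\[ \delta^D d^D k = 2D^*D k + r\circ k + k\circ r - 2\Rc(k) - \dg^*\dg k, \]
together with the variation formula (3.3)(iii) for $r'_g$. At the Einstein product $g$ we have $r = \lambda_0 g_0 + \lambda_1 g_1$, which is parallel, so $d^D r \equiv 0$ and $\delta^D d^D r \equiv 0$. Differentiating $\delta^{D(t)} d^{D(t)} r(g(t))$ at $t=0$ therefore yields
\[ (\delta^D d^D r)'(h) = \delta^D d^D \bigl(r'_g(h)\bigr) + \delta^D\bigl([d^D]'(h)\cdot r\bigr), \]
where the second summand is the correction from varying the Christoffel symbols $C_h$ of (3.3)(i) contracted against the parallel tensor $r$. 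After pairing with $h$ and using the adjointness $\langle \delta^D(\cdot),h\rangle_{L^2} = \langle \cdot, d^D h\rangle_{L^2}$, the first piece becomes $\langle r'_g(h), \delta^D d^D h\rangle_{L^2}$, where the product-of-Einsteins structure simplifies both factors drastically, while the second becomes $\langle [d^D]'(h)\cdot r,\, d^D h\rangle_{L^2}$.

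\textbf{Part (i).} For $h = fg_i$ with $\int_{M_{[i+1]}} f\, dv_{g_{[i+1]}} = 0$, the fact that $f$ depends only on $M_{[i+1]}$ and $TM_i$ is parallel gives $\dg(fg_i) = 0$, $\mathrm{tr}(fg_i) = n_i f$, and $D^*D(fg_i) = (\la f) g_i$ by (3.4). Because $r$ acts as $\lambda_i$ on $TM_i$, $r\circ(fg_i) + (fg_i)\circ r = 2\lambda_i fg_i$, and the block-diagonality of the product curvature gives $\Rc(fg_i) = \lambda_i fg_i$. Substitution into (3.3)(iii) yields $r'_g(fg_i) = \tfrac{1}{2}((\la f)g_i - n_i Ddf)$, and the identity above gives $\delta^D d^D(fg_i) = 2(\la f) g_i$. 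The $Ddf$ contribution drops out because $Ddf$ has no $M_i$-block (as $f$ is constant along $M_i$), and $|g_i|^2 = n_i$ supplies the $n_i$ factor. A direct expansion of $[d^D]'(h)\cdot r$ on $fg_i$ collapses to $-\tfrac{1}{2}(\lambda_0+\lambda_1)\, d^D(fg_i)$, where the two Einstein constants conspire through how $r = \lambda_0 g_0 + \lambda_1 g_1$ splits across the factors, and $\|d^D(fg_i)\|^2 = 2n_i\|df\|^2$. Summing gives the stated formula.

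\textbf{Part (ii).} Here $h = \alpha_0\odot\alpha_1$ is a TT-tensor, so the $\dg^*\dg h$ and $Dd(\mathrm{tr}\, h)$ terms drop out and $r'_g(h) = \tfrac{1}{2}\la_L h$. Three structural features of the product make this tractable. First, $\Rc(h) = 0$: the curvature of a Riemannian product is block-diagonal in the $M_0$--$M_1$ splitting while $h$ is supported on mixed two-planes, so every sum $\sum R(e_i,\cdot,e_j,\cdot)\, h(e_i,e_j)$ vanishes. Second, $r\circ h + h\circ r = (\lambda_0+\lambda_1) h$, because $r$ acts as $\lambda_i$ on each factor. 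Third, $D^*D(\alpha_0\otimes\alpha_1) = (D_0^*D_0\alpha_0)\otimes\alpha_1 + \alpha_0\otimes(D_1^*D_1\alpha_1)$, which by (\ref{eqn-BW}) equals $(\la\alpha_0-\lambda_0\alpha_0)\otimes\alpha_1 + \alpha_0\otimes(\la\alpha_1-\lambda_1\alpha_1)$. These combine to give $r'_g(h) = \tfrac{1}{2}(\la\alpha_0\odot\alpha_1 + \alpha_0\odot\la\alpha_1)$ and $\delta^D d^D h = 2(\la\alpha_0\odot\alpha_1+\alpha_0\odot\la\alpha_1) - (\lambda_0+\lambda_1)h$. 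Expanding $\langle r'_g(h),\delta^D d^D h\rangle_{L^2}$, the pure factors $\|\la\alpha_i\|^2$ come from norms of $\la\alpha_i\odot\alpha_{[i+1]}$, and the product-measure integral produces the cross term $\langle \la\alpha_0\odot\alpha_1,\alpha_0\odot\la\alpha_1\rangle_{L^2} = 2\langle \la\alpha_0,\alpha_0\rangle\langle \la\alpha_1,\alpha_1\rangle$, using $\|\alpha_i\|=1$.

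\textbf{Main obstacle.} The most delicate step is the correction term $\langle \delta^D([d^D]'(h)\cdot r), h\rangle_{L^2}$. In (i) it collapses cleanly to a scalar multiple of $d^D h$ because of the rigidity of $h = fg_i$. In (ii) the analogous computation is subtler: writing $[d^D]'(h)(r)(X,Y,Z) = r(X,C_h(Y,Z)) - r(Y,C_h(X,Z))$ and splitting $r = \lambda_0 g_0 + \lambda_1 g_1$ introduces pieces weighted by each $\lambda_i$ that do not recombine into a multiple of $d^D h$. Matching this against $d^D h$ and integrating by parts once more supplies precisely the remaining $-\tfrac{3}{2}(\lambda_0+\lambda_1)(\langle \la\alpha_0,\alpha_0\rangle + \langle \la\alpha_1,\alpha_1\rangle) + 4\lambda_0\lambda_1$ needed to upgrade the coefficient $-(\lambda_0+\lambda_1)$ from the main term to the stated $-\tfrac{5}{2}(\lambda_0+\lambda_1)$ and to produce the mixed $\lambda_0\lambda_1$ contribution. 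Careful tracking of which covariant derivative hits which $\alpha_i$, and which Einstein constant weights each contraction across the two rounds of integration by parts, is the main bookkeeping challenge.
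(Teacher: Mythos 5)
Your strategy is exactly the paper's: since $r$ is parallel, the term $((\delta^D)'(h))(d^Dr)$ drops and pairing with $h$ reduces everything to $\langle r'(h),\delta^Dd^Dh\rangle_{L^2}+\langle (d^D)'(h)r,\,d^Dh\rangle_{L^2}$, each evaluated via the product structure and the B\"ochner--Weitzenb\"ock formula (\ref{eqn-BW}). Part (i) is complete and correct: your identities $r'(fg_i)=\tfrac12\bigl((\la f)g_i-n_i\,Ddf\bigr)$ and $\delta^Dd^D(fg_i)=2(\la f)g_i$ hold, the $Ddf$ term indeed pairs to zero against $(\la f)g_i$, and your observation that $(d^D)'(fg_i)r=-\tfrac12(\lambda_0+\lambda_1)\,d^D(fg_i)$ with $\|d^D(fg_i)\|^2=2n_i\|df\|^2$ is a cleaner packaging of the index computation the paper performs. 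In part (ii) the main term is also handled correctly: $\Rc(h)=0$, $r\circ h+h\circ r=(\lambda_0+\lambda_1)h$, $D^*D(\alpha_0\odot\alpha_1)=(D^*D\alpha_0)\odot\alpha_1+\alpha_0\odot(D^*D\alpha_1)$, and the product-measure evaluation $\langle\la\alpha_0\odot\alpha_1,\alpha_0\odot\la\alpha_1\rangle_{L^2}=2\langle\la\alpha_0,\alpha_0\rangle_{L^2}\langle\la\alpha_1,\alpha_1\rangle_{L^2}$ all check out, giving $\langle r'(h),\delta^Dd^Dh\rangle_{L^2}=2\bigl(\|\la\alpha_0\|^2+\|\la\alpha_1\|^2+2\langle\la\alpha_0,\alpha_0\rangle_{L^2}\langle\la\alpha_1,\alpha_1\rangle_{L^2}\bigr)-(\lambda_0+\lambda_1)\bigl(\langle\la\alpha_0,\alpha_0\rangle_{L^2}+\langle\la\alpha_1,\alpha_1\rangle_{L^2}\bigr)$, consistent with the paper's $D^*D$-based expression.

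The one genuine weakness is the final step of (ii): you never compute $\langle (d^D)'(h)r,\,d^Dh\rangle_{L^2}$, you assert that it ``supplies precisely the remaining'' $-\tfrac32(\lambda_0+\lambda_1)\bigl(\langle\la\alpha_0,\alpha_0\rangle_{L^2}+\langle\la\alpha_1,\alpha_1\rangle_{L^2}\bigr)+4\lambda_0\lambda_1$. As written this is circular, since that is the value the term must have if the lemma is true, which is what you are proving. The assertion is in fact correct, and the computation you gesture at is the paper's: expand $C_h$ for $h=\alpha_0\odot\alpha_1$, use the pointwise identity $D(\alpha_i)_{pq}D(\alpha_i)_{qp}=|D\alpha_i|^2-\tfrac12|d\alpha_i|^2$ to obtain $\langle (d^D)'(h)r,d^Dh\rangle_{L^2}=-\tfrac32(\lambda_0+\lambda_1)\bigl(\|D\alpha_0\|^2+\|D\alpha_1\|^2\bigr)-\tfrac12\bigl(3\lambda_0^2+3\lambda_1^2-2\lambda_0\lambda_1\bigr)$, and then apply (\ref{eqn-BW}) with $\|\alpha_i\|=1$; this yields exactly the asserted quantity (no further integration by parts is needed beyond the B\"ochner identity). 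Writing out that expansion closes the gap, after which your argument coincides with the paper's proof.
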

\begin{proof}
Since $r$ is parallel $(({\delta}^D)'h)d^Dr=0$. Hence,
\bea \label{eqn-dD}
\langle ({\delta}^D{d^D}r)'(h), h \rangle_{L^2} = \langle {\delta}^D(d^D)'(h)r+ {\delta}^D{d}^D(r'(h)),h \rangle_{L^2} = \langle (d^D)'(h)r,d^Dh\rangle_{L^2} + \langle r'(h),{\delta}^Dd^Dh\rangle_{L^2}
\eea
$(d^D)'(h)r$ can be expressed in terms of an orthonormal basis $\{e_i\}$ as
\begin{eqnarray*}
((d^D)'(h)r)_{pql}&=& r_{ip}C_h(e_q,e_l, e_i)- r_{iq} C_h(e_p,e_l, e_i)
\end{eqnarray*}
Therefore,
\begin{eqnarray*}
\langle (d^D)'(h)r,d^Dh \rangle &=&[r_{pi}C_{jkp}-r_{jp}C_{ikp}][Dh_{ijk}-Dh_{jik}]\\
&=& 2[r_{pi}C_{jkp}-r_{jp}C_{ikp}]Dh_{ijk}\\
&=& Dh_{ijk}\{r_{pi}(Dh_{jkp}+Dh_{kjp}-Dh_{pjk})-r_{jp}(Dh_{ikp}+Dh_{kip}-Dh_{pik})\}\\
&=&  [-r_{ii}(Dh_{ijk})^2-r_{jj}(Dh_{ijk})^2+3r_{ii}Dh_{ijk}Dh_{jik}-r_{jj}Dh_{ijk}Dh_{kij}]\\
 &=& (3r_{ii}-r_{kk})Dh_{jik}Dh_{ijk}-(r_{ii}+r_{jj})(Dh_{ijk})^2
 \Eea
Putting $h=fg_1$ and using formulae in (\ref{eqn-Df}) we have,
\Bea( (d^D)'(fg_1)r,d^D(fg_1) ) &=& (3r_{ii}-r_{kk})df_jg_{1ik}df_ig_{1jk}-(r_{ii}+r_{kk})[df_ig_{1kj}]^2\\
                                &=&= -n_1(\lambda_0+\lambda_1)|df|^2
\Eea
From (\ref{variation1}) and (\ref{eqn-Df}) one obtains \
$\langle r'(fg_1),{\delta}^Dd^D(fg_1)\rangle_{L^2}= n_1\|\la f\|^2.$  \ Therefore, 
$$\langle (\delta^Dd^Dr)'(fg_1),fg_1) \rangle_{L^2} = n_1\|\la f\|^2-n_1(\lambda_0+\lambda_0)\|df\|^2.$$
Considering $h=\alpha_0\odot \alpha_1$ in (\ref{eqn-dD}), 
\Bea
\langle r'(\alpha_0\odot \alpha_1),{\delta}^Dd^D(\alpha_0\odot \alpha_1)\rangle_{L^2} &=& \|D^{*}D (\alpha_0\odot \alpha_1)\|^2+ \frac{1}{2}(\lambda_0+\lambda_1)^2\|\alpha_0\odot \alpha_1\|^2\\
&&\ \  + \frac{3}{2}(\lambda_0+\lambda_1)\langle D^{*}D(\alpha_0\odot \alpha_1),\alpha_0\odot \alpha_1\rangle_{L^2}
\Eea
\Bea \langle (d^D)'(\alpha_0\odot\alpha_1)r,d^D(\alpha_0\odot \alpha_1) \rangle 
 &=& (3r_{ii}-r_{kk})Dh_{jik}Dh_{ijk}-(r_{ii}+r_{jj})(Dh_{ijk})^2\\
 &=& (3\lambda_0-\lambda_1)|\alpha_1 |^2 D(\alpha_0)_{ij}D(\alpha_0)_{ji}+(3\lambda_1-\lambda_0)|\alpha_0 |^2 D(\alpha_1)_{ij}D(\alpha_1)_{ji}\\
 &&\;\; -(3\lambda_0+\lambda_1)|\alpha_1|^2 |D\alpha_0|^2- (3\lambda_1+\lambda_0)|\alpha_0 |^2 |D\alpha_1|^2.
\Eea
Since $D(\alpha_i)_{pq}D(\alpha_i)_{qp} = |D\alpha_i|^2-\frac{1}{2}|d\alpha_i|^2$ one obtains,
\Bea \langle (d^D)'(\alpha_0\odot\alpha_1)r,d^D(\alpha_0\odot \alpha_1) \rangle &=&-\frac{1}{2}(3\lambda_0-\lambda_1)|\alpha_1|^2|d\alpha_0|^2-2\lambda_1 |\alpha_1|^2|D\alpha_0|^2\\
&& -\frac{1}{2}(3\lambda_1-\lambda_0)|\alpha_0|^2|d\alpha_1|^2-2\lambda_0 |\alpha_0|^2|D\alpha_1|^2 
\Eea
Using (\ref{eqn-BW}) we have,    
\begin{eqnarray*}
\langle (d^D)'(h)r,d^Dh \rangle_{L^2} = -\frac{1}{2}(3\lambda_0^2+3\lambda_1-2\lambda_0\lambda_1)\|\alpha_0\|^2\|\alpha_1\|^2- \frac{3}{2}(\lambda_0+\lambda_1)(\|D\alpha_0 \|^2+\|D\alpha_1 \|^2)
\end{eqnarray*}
Putting $\|\alpha_i\|=1$ and combining all the expressions above, the result follows.
\end{proof}
\begin{lem}\label{lem2}
\Bea
&& (i) \ \langle (D^{*}Dr)'(fg_i), fg_i \rangle_{L^2}=\frac{n_i}{2}\left(\|\la f\|^2-2\lambda_i\|df\|^2\right)\\ 
&& (ii) \ \langle (D^{*}Dr)'(\alpha_1\odot \alpha_2), \alpha_1\odot \alpha_2 \rangle_{L^2} =\| \la \alpha_1 \|^2 + \|\la \alpha_2 \|^2 +2 \langle \la \alpha_1,\alpha_1 \rangle_{L^2} \langle \la \alpha_2,\alpha_2 \rangle_{L^2}+ 4 \lambda_1\lambda_2\\
&& \hspace{2.3in}-\frac{1}{2}(3\lambda_1+5\lambda_2)\langle \la \alpha_1,\alpha_1 \rangle_{L^2}-\frac{1}{2}(3\lambda_2+5\lambda_1)\langle \la \alpha_2,\alpha_2 \rangle_{L^2} 
\Eea
\end{lem}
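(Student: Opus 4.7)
My plan is to follow the template of Lemma \ref{lem1}. At a product of Einstein manifolds the Ricci tensor $r$ is parallel, so $Dr = 0$; differentiating $t\mapsto D^*(t)D(t)r(t)$ at $t=0$ therefore gives
\[
(D^*Dr)'(h) \;=\; D^*\bigl(D'(h)\,r\bigr) \;+\; D^*D\bigl(r'(h)\bigr),
\]
since the variation of $D^*$ acts on the vanishing tensor $Dr$ and contributes nothing. Pairing with $h$ and integrating by parts yields
\[
\langle (D^*Dr)'(h),h\rangle_{L^2} \;=\; \langle D'(h)\,r,\,Dh\rangle_{L^2} \;+\; \langle r'(h),\,D^*Dh\rangle_{L^2}.
\]
The first pairing is essentially the calculation already carried out in the proof of Lemma \ref{lem1}, but without the antisymmetrization in the first two indices that the operator $d^D$ forces. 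The second pairing requires a concrete formula for $r'(h)$.

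For part (i) with $h=fg_i$, $f\in C^\infty M_{i+1}$, I would plug $h$ into (\ref{variation1})(iii) and use the product-Einstein identities $\delta(fg_i)=0$, $tr(fg_i)=n_i f$, $\Rc(fg_i)=\lambda_i fg_i$, and $r\circ(fg_i)+(fg_i)\circ r=2\lambda_i fg_i$. These make the Ricci-variation formula collapse to $r'(fg_i)=\tfrac{1}{2}\bigl((\Delta f)g_i-n_i\,Ddf\bigr)$. With $D^*D(fg_i)=(\Delta f)g_i$ from (\ref{eqn-Df}) and $tr_{g_i}(Ddf)=0$ (since $f$ depends only on the other factor), the second pairing simplifies to $\frac{n_i}{2}\|\Delta f\|^2$. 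For the first pairing, in an orthonormal frame adapted to the product one has $(Dr)'(h)_{ijk}=-\lambda_k C_{ijk}(h)-\lambda_j C_{ikj}(h)$; contracting with $Dh_{abc}=(df)_a(g_i)_{bc}$ gives $-n_i\lambda_i\|df\|^2$. Summing yields (i).

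For part (ii) with $h=\alpha_0\odot\alpha_1$, the crucial geometric observation is that $h$ is supported only on mixed tangent planes. Since the Riemann tensor of a product metric is block-diagonal, this forces $\Rc(h)=0$; moreover $r\circ h + h\circ r=(\lambda_0+\lambda_1)h$ because $r$ acts by $\lambda_j$ on the $M_j$-direction. Together with the TT condition on $h$, (\ref{variation1})(iii) collapses to $r'(h)=\tfrac{1}{2}\bigl(D^*Dh+(\lambda_0+\lambda_1)h\bigr)$. Since each $\alpha_i$ depends only on the $M_i$-factor, $D^*Dh=(D^*D\alpha_0)\odot\alpha_1+\alpha_0\odot(D^*D\alpha_1)$, and (\ref{eqn-BW}) rewrites each $D^*D\alpha_i=\Delta\alpha_i-\lambda_i\alpha_i$. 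Expanding $\langle r'(h),D^*Dh\rangle_{L^2}$ with the help of $|\alpha_0\odot\alpha_1|^2=2|\alpha_0|^2|\alpha_1|^2$ (the mixed point-wise contractions vanish because $\alpha_0,\alpha_1$ live on orthogonal factors) produces a quadratic combination in $\|\Delta\alpha_i\|^2$, $\langle\Delta\alpha_i,\alpha_i\rangle_{L^2}$, their product, and $\lambda_0,\lambda_1$.

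The main obstacle is the first pairing $\langle D'(h)r,Dh\rangle_{L^2}$ for $h=\alpha_0\odot\alpha_1$. One must expand $(Dr)'(h)_{ijk}=-\lambda_k C_{ijk}-\lambda_j C_{ikj}$ against $Dh_{ijk}$ and split the sum according to whether each index lies in $M_0$ or $M_1$, in parallel to the cross-term computation in Lemma \ref{lem1}. The calculation should then be streamlined by the identity $D(\alpha_i)_{pq}D(\alpha_i)_{qp}=|D\alpha_i|^2-\tfrac{1}{2}|d\alpha_i|^2$ together with $\delta_{g_i}\alpha_i=0$ and (\ref{eqn-BW}), which converts every $\|D\alpha_i\|^2$ into $\langle\Delta\alpha_i,\alpha_i\rangle_{L^2}-\lambda_i$. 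Combining this with the second-pairing contribution and carefully collecting like terms should then produce the asymmetric coefficients $-\tfrac{1}{2}(3\lambda_1+5\lambda_2)$, $-\tfrac{1}{2}(3\lambda_2+5\lambda_1)$, and $4\lambda_1\lambda_2$ in (ii); the asymmetry (compared with the symmetric coefficient $-\tfrac{5}{2}(\lambda_0+\lambda_1)$ in Lemma \ref{lem1}) is precisely the signature of working with $D^*D$ rather than $\delta^Dd^D$.
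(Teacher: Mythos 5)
Your proposal is correct and takes essentially the same route as the paper: using $Dr=0$ to write $\langle (D^*Dr)'(h),h\rangle_{L^2}=\langle D'(h)r,Dh\rangle_{L^2}+\langle r'(h),D^*Dh\rangle_{L^2}$, collapsing $r'(h)$ via (\ref{variation1})(iii) (with $\Rc(h)=0$ and $r\circ h+h\circ r=(\lambda_0+\lambda_1)h$ in the mixed case, and $r'(fg_i)=\tfrac12((\la f)g_i-n_i\,Ddf)$ in the conformal case), and finishing with $D(\alpha)_{pq}D(\alpha)_{qp}=|D\alpha|^2-\tfrac12|d\alpha|^2$ and (\ref{eqn-BW}); your intermediate formulas such as $(D'(h)r)_{ijk}=-\lambda_k C_{ijk}-\lambda_j C_{ikj}$ agree with the paper's. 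The only difference is that for part (ii) you stop short of executing the explicit index contraction of $\langle D'(\alpha_0\odot\alpha_1)r,\,D(\alpha_0\odot\alpha_1)\rangle$ that the paper carries out to obtain the asymmetric coefficients, but the setup and all identities needed for that bookkeeping are correctly in place.
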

\begin{proof}
Since the Ricci tensor is parallel $D^{*}(D'(h)r)=0.$ Therefore,
\Bea
\langle (D^{*}Dr)'(h),h\rangle_{L^2}= \langle D^{*}(D'(h)r)+ D^{*}D(r'(h)),h \rangle_{L^2} = \langle (D)'(h)r,Dh\rangle_{L^2} + \langle r'(h),D^{*}Dh\rangle_{L^2}
\Eea
With respect to an orthonormal basis $D_g'(h)r$ can be written as, 
$$(D'(h)r)_{pql}=- r_{il}C_{pqi}- r_{iq}C_{pli}=-r_{ll}C_{pql}-r_{qq}C_{plq}=-2r_{qq}C_{plq}$$
For $h=fg_1$ one has $\langle r'(h),D^{*}Dh\rangle_{L^2}= \frac{1}{2}\|\la f\|^2$.
\Bea
\langle (D'(fg_1)r),D(fg_1) \rangle = -2r_{qq}C_{plq}df_pg_{1ql}= -r_{qq}(df_p)^2g_{1qq} = -n_1\lambda_1|df|^2.
\Eea
Therefore,
$$\langle (D^{*}Dr)'(fg_1),fg_1\rangle_{L^2} =\frac{n_1}{2}\|\la f\|^2-n_1\lambda_1\|df\|^2.$$
Considering $h=\alpha_1\odot \alpha_2$ we have,
\Bea
&&\langle r'(\alpha_1\odot\alpha_2),D^*D(\alpha_1\odot \alpha_2)\rangle_{L^2}=\frac{1}{2}\|D^*D(\alpha_1\odot\alpha_2)\|^2+\frac{1}{2}\|D(\alpha_1\odot \alpha_2)\|^2
\Eea
Also in this case, $\langle (D'(h)r), Dh \rangle =- r_{kk}(Dh_{ijk}+ Dh_{jik}-Dh_{kji})Dh_{ijk}.$ Hence,
\Bea
\langle (D'(\alpha_0\odot\alpha_1)r), D(\alpha_0\odot\alpha_1) \rangle
&=& -(\lambda_0-\lambda_1)(|\alpha_1|^2 (D\alpha_{0ij}D\alpha_{0ji})-|\alpha_0 |^2 (D\alpha_{1ij}D\alpha_{1ji}))\\
  &&  -\frac{1}{2}(\lambda_0+\lambda_1)|D(\alpha_0\odot\alpha_1)|^2\\
&=& -\frac{3\lambda_0+\lambda_1}{2}|D\alpha_0|^2|\alpha_1|^2-\frac{3\lambda_2+\lambda_1}{2}|D\alpha_1|^2|\alpha_0|^2\\
&& + \frac{1}{2}(\lambda_0-\lambda_1)^2|\alpha_0|^2|\alpha_1|^2
\Eea
\noindent Putting $\|\alpha_i\|=1$ ($i=1,2$) and using (\ref{eqn-BW}) we obtain the desired result.
\end{proof}
\begin{lem}\label{lem3}
\Bea
&& (i) \ \langle(r\circ r)'(fg_i),fg_i\rangle_{L^2}=\lambda_i n_i(\|df\|^2-\lambda_i\|f\|^2)\\
&& (ii) \ \langle(r\circ r)'(\alpha_1\odot \alpha_2),\alpha_1\odot \alpha_2\rangle_{L^2} = (\lambda_1+\lambda_2)(\langle \la \alpha_1,\alpha_1 \rangle_{L^2}+\langle \la \alpha_2,\alpha_2 \rangle_{L^2})-2\lambda_1\lambda_2
\Eea
\end{lem}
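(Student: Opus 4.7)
The plan is to differentiate the pointwise composition $(r\circ r)_{ij}=g^{kl}r_{ik}r_{jl}$ along $g(t)=g+th$ at $t=0$. Using $\dot g^{kl}=-h^{kl}$, together with the symmetry of $r$, $h$ and $r'(h)$, a standard rearrangement gives
\[
\langle (r\circ r)'(h),h\rangle_{L^2}=-\int_M h^{ij}h^{kl}r_{ik}r_{jl}\,dv_g+\langle r'(h),\,r\circ h+h\circ r\rangle_{L^2}.
\]

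The key structural observation is that on the Einstein product $(M,g)$ the Ricci endomorphism acts as $\lambda_0$ on $TM_0$ and as $\lambda_1$ on $TM_1$. Hence if $h=fg_i$ (supported on the $M_i$-block) then $r\circ h+h\circ r=2\lambda_ih$, and if $h=\alpha_0\odot\alpha_1$ (which has one leg in each factor) then $r\circ h+h\circ r=(\lambda_0+\lambda_1)h$; the second summand above therefore collapses to $2\lambda_i\langle r'(h),h\rangle_{L^2}$ or $(\lambda_0+\lambda_1)\langle r'(h),h\rangle_{L^2}$ respectively. The algebraic term $\int h^{ij}h^{kl}r_{ik}r_{jl}\,dv_g$ is a direct block-by-block contraction: for $h=fg_i$ it evaluates to $\lambda_i^2 n_i\|f\|^2$; for $h=\alpha_0\odot\alpha_1$ only the two pairings in which $(i,k)$ and $(j,l)$ both lie in the same factor survive the block-diagonal structure of $r$, giving $2\lambda_0\lambda_1\int|\alpha_0|^2|\alpha_1|^2\,dv_g=2\lambda_0\lambda_1$ by $\|\alpha_j\|=1$ and the product measure.

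It then remains to compute $\langle r'(h),h\rangle_{L^2}$ from formula (iii) in (\ref{variation1}). For $h=fg_i$ the inputs $\dg(fg_i)=0$, $\mathrm{tr}(fg_i)=n_if$, $\Rc(fg_i)=\lambda_ifg_i$, combined with (\ref{eqn-Df}), give $r'(fg_i)=\tfrac{1}{2}\bigl((\la f)g_i-n_iDdf\bigr)$; since $f$ depends only on $M_{i+1}$, $Ddf$ is supported on the $M_{i+1}$-block while $fg_i$ sits on $M_i$, so the Hessian term drops and $\langle r'(fg_i),fg_i\rangle_{L^2}=\tfrac{n_i}{2}\|df\|^2$, which upon substitution yields (i). For $h=\alpha_0\odot\alpha_1$ one has $\mathrm{tr}\,h=0$ and $\dg h=0$, and crucially $\Rc(\alpha_0\odot\alpha_1)=0$ because the curvature tensor of a Riemannian product vanishes whenever its indices are not all in a single factor, while $h^{ij}$ has only mixed-factor support. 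Therefore $r'(\alpha_0\odot\alpha_1)=\tfrac{1}{2}\bigl(D^*D(\alpha_0\odot\alpha_1)+(\lambda_0+\lambda_1)\,\alpha_0\odot\alpha_1\bigr)$. Expanding $|D(\alpha_0\odot\alpha_1)|^2$ via the Leibniz rule produces four pieces each supported on a distinct combination of index-blocks, so the cross terms vanish and $\|D(\alpha_0\odot\alpha_1)\|^2=2\|D\alpha_0\|^2+2\|D\alpha_1\|^2$; the B\"ochner--Weitzenb\"ock identity (\ref{eqn-BW}) converts each $\|D\alpha_j\|^2=\langle\la\alpha_j,\alpha_j\rangle_{L^2}-\lambda_j$. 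Assembling the three ingredients produces (ii).

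The main obstacle is purely the index bookkeeping: verifying on the product that $\Rc(\alpha_0\odot\alpha_1)=0$, that the cross terms in $|D(\alpha_0\odot\alpha_1)|^2$ drop by disjointness of supports, and that $r\circ h+h\circ r$ simplifies to the stated multiples of $h$. Once these reductions are in place, no tools beyond (\ref{variation1}), (\ref{eqn-Df}) and (\ref{eqn-BW}) --- all already used in Lemmas~1 and~2 --- are required.
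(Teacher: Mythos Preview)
Your proof is correct and follows essentially the same route as the paper: both differentiate $(r\circ r)_{pq}=g^{ij}r_{pi}r_{qj}$ to obtain $\langle (r\circ r)'(h),h\rangle=-\langle r\circ h\circ r,h\rangle+\langle r'(h),\,r\circ h+h\circ r\rangle$, then evaluate each piece on $h=fg_i$ and $h=\alpha_0\odot\alpha_1$ using the block structure of $r$ and formula~(\ref{variation1})(iii). You make the reductions $r\circ h+h\circ r=2\lambda_i h$ (resp.\ $(\lambda_0+\lambda_1)h$), $\Rc(\alpha_0\odot\alpha_1)=0$, and the splitting of $\|D(\alpha_0\odot\alpha_1)\|^2$ more explicit than the paper does, but the argument is the same.
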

\begin{proof}
In local coordinates $r\circ r$ can be described as $(r\circ r)_{pq}=g^{ij}r_{pi}r_{qj}.$ Therefore,
\begin{eqnarray*}
[(r\circ r)'(h)]_{pq}&=& (g^{ij})'r_{pi}r_{qj}+ g^{ij}r'_{pi}r_{qj}+g^{ij}r_{pi}r'_{qj}\\
&=& -g^{ik}h_{kl}g^{lj}r_{pi}r_{qj}+ (r'(h)\circ r)_{pq}+(r\circ r'(h))_{pq}\\
&=& -(r\circ h\circ r)_{pq}+ (r'(h)\circ r+r\circ r'(h))_{pq}.
\end{eqnarray*}
Hence, \ \ $\langle (r\circ r)'(h),h\rangle = - r_{pp}r_{qq} (h_{pq})^2 +\langle r'(h) , h\circ r+r\circ h\rangle.$\\
Putting $h=fg_1$ in the above equation we have,
\Bea 
\langle (r\circ r)'(fg_1),fg_1\rangle_{L^2} &=& -\lambda_1^2n_1\|f\|^2+n_1\lambda_1\|df\|^2
\Eea
When $h=\alpha_0\odot \alpha_1$, one has
\Bea
\langle (r\circ r)'(\alpha_0\odot\alpha_1),\alpha_0\odot\alpha_1\rangle_{L^2}
 &=& \frac{1}{2}(\lambda_0^2+\lambda_1^2)\|\alpha_0\odot\alpha_1\|^2+\frac{1}{2}\|D(\alpha_0\odot\alpha_1)\|^2
\Eea
The Lemma now follows from (\ref{eqn-BW}).
\end{proof}
\begin{lem}\label{lem4}
Let $(M^{n_0+n_1},g)$ be the product of a spherical space form  $(M_0,g_0)$ and a compact hyperbolic manifold $(M_1,g_1)$ with constant sectional curvatures $1$ and $-1$ respectively. Then
\Bea && (i) \ \langle (\Rt)'(fg_i),fg_i\rangle_{L^2}=-\|R^i\|^2\|f\|^2;\\
&& (ii) \ (|R|^2)'(fg_i)=-2f|R^i|^2;\\
&& (iii) \ \langle (\Rt)'(\alpha_0 \odot \alpha_1),\alpha_0 \odot \alpha_1\rangle_{L^2} =2\langle \la \alpha_0,\alpha_0\rangle_{L^2} -2\langle \la \alpha_1,\alpha_1 \rangle_{L^2}-4(n_0+n_1-2)
\Eea
where $R^i$ denotes the Riemannian curvature tensor of $(M_i,g_i)$.
\end{lem}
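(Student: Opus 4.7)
The strategy is to work in a local orthonormal frame adapted to the product splitting $TM=TM_0\oplus TM_1$, writing Roman indices $a,b,c,\dots$ for $TM_0$-directions and Greek indices $\alpha,\beta,\gamma,\dots$ for $TM_1$-directions. At the product metric $g$, the Riemann tensor has block structure: the only nonvanishing components of $R$ are the all-$M_0$ components $R^0$ and the all-$M_1$ components $R^1$; mixed components vanish. Since both factors have constant sectional curvature of modulus one, a direct computation gives $|R^i|^2=2n_i(n_i-1)$ and $\Rt$ is block-diagonal with $\Rt|_{TM_i}=2(n_i-1)g_i$. These structural facts drive all three computations.

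For (ii), I expand $|R|^2=g^{aa'}g^{bb'}g^{cc'}g^{dd'}R_{abcd}R_{a'b'c'd'}$ and differentiate to obtain the pointwise identity
\[
(|R|^2)'(h)=2\langle R'(h),R\rangle-4\Rt^{ab}h_{ab}.
\]
For $h=fg_i$ with $f\in C^\infty M_{i+1}$, the family $g(s)=g+sh$ is a warped product deformation, and the standard warped product curvature formulas show that at $s=0$ the only nonvanishing components of $R'(fg_i)$ are the all-$M_i$ block $fR^i$ together with a mixed block of the form $-\tfrac12 Ddf\otimes g_i$. Because the mixed components of $R$ itself vanish at $g$, the pairing $\langle R'(fg_i),R\rangle$ collapses to the all-$M_i$ sector and equals $f|R^i|^2$. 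Likewise $\Rt^{ab}h_{ab}=f|R^i|^2$, yielding $(|R|^2)'(fg_i)=-2f|R^i|^2$.

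For (i), expanding $\Rt_{ab}=g^{ij}g^{kl}g^{mn}R_{aikm}R_{bjln}$ and differentiating, I track the contributions from (a) the three inverse-metric variations and (b) the two Riemann-tensor variations. By the same block argument used in (ii), each of the three metric variations contributes $-2f(n_i-1)\delta_{\alpha\beta}$ on the $TM_i$-block and zero elsewhere, while the two $R$-derivatives together contribute $+4f(n_i-1)\delta_{\alpha\beta}$ on the $TM_i$-block. Summing, $(\Rt)'(fg_i)$ is supported on the $TM_i$-block with diagonal value $-2f(n_i-1)$; pairing with $fg_i$ and integrating yields $-2n_i(n_i-1)\|f\|^2=-\|R^i\|^2\|f\|^2$.

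For (iii), the variation $h=\alpha_0\odot\alpha_1$ is supported on the off-block, with $h_{a\alpha}=\alpha_0(X_a)\alpha_1(V_\alpha)$ in the adapted frame. I evaluate $R'(h)(x,y,z,w)$ from \eqref{variation1} in each of the possible index patterns. The key simplification is that $\alpha_0,\alpha_1$ extend trivially to $M$, so $D_X\alpha_0=0$ for $X\in TM_1$ and $D_X\alpha_1=0$ for $X\in TM_0$, killing most of the second covariant derivatives. After assembling $(\Rt)'(h)_{ab}$ and pairing with $h$, only the off-block entries survive, and the integral decomposes into terms of the form $\|D\alpha_i\|^2$, $\|d\alpha_i\|^2$, and constant pieces from the sectional curvatures $\pm 1$. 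The B\"ochner-Weitzenb\"ock identity \eqref{eqn-BW} applied to each Einstein factor gives $\|D\alpha_i\|^2=\langle\Delta\alpha_i,\alpha_i\rangle_{L^2}-\lambda_i$ (using $\|\alpha_i\|=1$), and substituting $\lambda_0=n_0-1$, $\lambda_1=-(n_1-1)$ collapses the expression to the claimed $2\langle\Delta\alpha_0,\alpha_0\rangle_{L^2}-2\langle\Delta\alpha_1,\alpha_1\rangle_{L^2}-4(n_0+n_1-2)$. The main obstacle is part (iii): enumerating the nonvanishing index patterns of $R'(\alpha_0\odot\alpha_1)$ and carefully combining them, although the block structure of $R$ keeps the bookkeeping finite.
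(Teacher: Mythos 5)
Your parts (i) and (ii) are correct and follow essentially the paper's own route: differentiate the fully contracted expressions for $\Rt$ and $|R|^2$, use $(g^{ij})'=-g^{im}h_{mn}g^{nj}$, and exploit the block structure of the product curvature together with the fact that the all-$M_i$ block of $R'(fg_i)$ is $fR^i$ while the only other nonvanishing block is mixed and pairs to zero against $R$. Your accounting ($-2f(n_i-1)\delta_{pq}$ from each of the three inverse-metric variations, $+4f(n_i-1)\delta_{pq}$ from the two curvature variations, hence $-2f(n_i-1)$ on the $TM_i$-diagonal) reproduces the paper's conclusion $\langle(\Rt)'(fg_i),fg_i\rangle=-|f|^2|R^i|^2$ and $(|R|^2)'(fg_i)=-2f|R^i|^2$, so these two items are fine.

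Part (iii), however, contains a genuine gap: the decisive computation is asserted rather than performed. You say that after assembling $(\Rt)'(\alpha_0\odot\alpha_1)$ and pairing with $h$ the integral "decomposes into terms of the form $\|D\alpha_i\|^2$, $\|d\alpha_i\|^2$, and constant pieces" and then "collapses to the claimed" formula; but that collapse is precisely what must be proved, and none of the steps producing it appear in your argument. Concretely, the paper needs (a) the vanishing of the quadratic term $h_{mn}h_{pq}R_{pmij}R_{qnij}=R(\alpha_0,\alpha_1,e_i,e_j)^2=0$ (mixed curvature components vanish); (b) the vanishing of the term $h_{pq}D^2h_{pkij}R_{qijk}$, handled using that $R$ is parallel and $\Rc(h)=0$ for a mixed tensor $h$; and (c) the Ricci identity $D^2(\alpha_k)_{iji}(\alpha_k)_j=D^2(\alpha_k)_{jii}(\alpha_k)_j-(r_k)_{jj}\big((\alpha_k)_j\big)^2$ combined with $\delta\alpha_k=0$ and an integration by parts. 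Step (c) is exactly where the coefficients $\pm 2$ in front of $\|D\alpha_0\|^2$, $\|D\alpha_1\|^2$ and the constant $-4(n_0+n_1-2)$ come from, the sign asymmetry between the two factors being the Ricci curvatures $n_0-1$ and $-(n_1-1)$. Your key simplification ($D_X\alpha_0=0$ for $X\in TM_1$ and vice versa) only reduces the number of terms; it does not generate these cancellations or constants. The expected appearance of $\|d\alpha_i\|^2$ terms (none occur in this lemma before the B\"ochner step, which you only use at the very end to convert $\|D\alpha_i\|^2$ into $\langle\la\alpha_i,\alpha_i\rangle_{L^2}$) further indicates that the bookkeeping was not actually carried out. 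As written, (iii) is a plausible plan ending in the correct formula, not a proof; filling it in amounts to reproducing the paper's computation.
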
 
\begin{proof} With respect to an arbitrary basis $\Rt$ can be written as
\Bea \Rt_{pq}=g^{i_1i_2}g^{j_1j_2}g^{k_1k_2}R_{pi_1j_1k_1}R_{qi_2j_2k_2}.
\Eea
Differentiating each terms and putting
$(g^{ij})'=-g^{im}h_{mn}g^{nj}$ we have,
\Bea (\Rt_g)'(h)_{pq}&=&-h_{mn}\left(R_{pmij}R_{qnij}+R_{pimj}R_{qinj}+R_{pijm}R_{qijn}\right) +(R'_g(h))_{pijk}R_{qijk}+R_{pijk}(R'_g(h))_{qijk}\\
&=&-h_{mn}[R_{pmij}R_{qnij}+2 R_{pimj}R_{qinj}]+(R'_g(h))_{pijk}R_{qijk}+R_{pijk}(R'_g(h))_{qijk}
\Eea
Setting $$L(x,y,z,w)=L_h(x,y,z,w)=D^2_{y,z}h(x,w)+D^2_{x,w}h(y,z)-D^2_{x,z}h(y,w)-D^2_{y,w}h(x,z)$$
and using symmetries of $R$ and $h$, one obtains 
\Bea ((R'_g(h))_{pijk}R_{qijk}+R_{pijk}(R'_g(h))_{qijk})h_{pq}&=&2 h_{pq}(R'_g(h))_{pijk}R_{qijk}=  \{L_{pijk}h_{pq}R_{qijk}+2 R_{pimj}R_{qinj}h_{mn}\}
\Eea
Therefore,
\Bea \langle (\Rt)'(h),h\rangle&=&(\Rt_g)'(h)_{pq}h_{pq}\\
&=&\{L_{pijk}R_{qijk}h_{pq}-h_{mn}h_{pq}R_{pmij}R_{qnij}\}
\Eea
Let $h=fg_i$. Using (\ref{eqn-Df} ) we have $L=Ddf\wedge g_i$ and
\Bea 
\langle (\Rt)'(fg_i),fg_i\rangle=\{fL_{pqjk}R^1_{pqjk}-f^2R^i_{pmqj}R^i_{pmqj}\}= \langle Ddf\wedge g_i, fR^i\rangle-|f|^2|R^i|^2= -|f|^2|R^i|^2.
\Eea
Also, since $|R|^2=tr(\Rt)$, one has
\Bea 
(|R|^2)'(fg_i)&=&\langle(\Rt)'(fg_i),g\rangle-\langle \Rt,fg_i\rangle= -2f|R^i|^2
\Eea
If $h=\alpha_0\odot \alpha_1$ then 
\Bea
h_{mn}h_{pq}R_{pmij}R_{qnij}=R(\alpha_0,\alpha_1,e_i,e_j)R(\alpha_0,\alpha_1,e_i,e_j)=0
\Eea
Also,
\Bea  
h_{pq}L_{pijk}R_{qijk}&=&  h_{pq}[D^2h_{pkij}+D^2h_{ijpk}-D^2h_{pjik}-D^2h_{ikpj}]R_{qijk}= 2h_{pq}[D^2h_{pkij}-D^2h_{ikpj}]R_{qijk}
\Eea
Since $R$ is parallel, $h_{pq}D^2h_{pkij}R_{qijk}=-\langle \dg^*\dg\Rc(h)),h\rangle=0.$
Finally, one obtains
\Bea 
L_{pijk}R_{qijk}h_{pq}
&=& 2 D^2h_{ijpk}R_{qijk}h_{pq} \\ 
&=& 2[((\alpha_0)_p)^2 (\alpha_1)_q (D^2 \alpha_1)_{ijk}R_{qijk}+ ((\alpha_1)_p)^2 (\alpha_0)_q (D^2 \alpha_0)_{ijk}R_{qijk}]\\
&=&2|\alpha_1|^2(D^2(\alpha_0)_{iji}-D^2(\alpha_0)_{iij})(\alpha_0)_j-2|\alpha_0|^2(D^2(\alpha_1)_{iji}-D^2(\alpha_1)_{iij})(\alpha_1)_j\\
&=& 2(|\alpha_1|^2|D\alpha_0|^2-|\alpha_1|^2|D\alpha_0|^2)+ 2|\alpha_1|^2 D^2(\alpha_0)_{iji}(\alpha_0)_j -2|\alpha_0|^2D^2(\alpha_1)_{iji}(\alpha_1)_j
\Eea
The Ricci-identity for $1$-forms implies, 
\Bea 
D^2(\alpha_k)_{iji} (\alpha_k)_{j} =((D^2(\alpha_k)_{jii}-\alpha_k(R_{jii})) (\alpha_k)_{j}= -e_j(\delta\alpha_k)(\alpha_k)_j- (r_k)_{jj}(\alpha_k)_j^2
\Eea 
Using $\delta(\alpha_k)=0$ and putting $r_0=n_0-1, r_1=-(n_1-1)$ one obtains
\Bea 
\langle (\Rt)'(h),h\rangle_{L^2} = 2(\|\alpha_1\|^2\|D\alpha_0\|^2-\|\alpha_0\|^2\|D\alpha_1\|^2)-2(n_0+n_1-2)\|\alpha_0\|^2\|\alpha_1\|^2
\Eea
Finally, putting $\|\alpha_0\|=1=\|\alpha_1\|$, and using (\ref{eqn-BW}) the result follows.
\end{proof}
\section{Second variations of $\mathcal{R}ic$}
Let $(M,g)=(M_0\times M_1, g_0+g_1)$ be as described in Section 3 with
$|\lambda_0|=|\lambda_1 |.$ A traceless symmetric $2$-tensor $h$ on $M$ decomposes as \cite{KO}
$$ h=h_1+\hat{h}+h_2+f(n_1g_0-n_0g_1)$$
where $h_1,h_2$ are TT-tensors tangent to $M_1$ and $M_2$ respectively, $\hat{h}$ is a TT-tensor on $M$ surviving only on mixed planes, and $f\in C^{\infty}M$. This decomposition is orthogonal. Let $S^2_1M, S^2_0M, S^2_2M$ denote the first three components of the above decomposition respectively. Then 
\be \label{eqn:decomposition2}
S^2M=Im\dg^*\oplus S^2_1M\oplus S^2_0M\oplus S^2_2M\oplus (C^{\infty}Mg_1+C^{\infty}Mg_2)
\ee
\subsection{Transverse-traceless variations :} From the above decomposition it is easy to see that if $h$ is a TT-tensor on $M$ then $h\in  S^2_1M\oplus S^2_0M\oplus S^2_2M\oplus \mathbb{R}.(n_1g_0-n_0g_1).$ The last component is the span of parallel tensor $n_1g_0-n_0g_1.$ 
\begin{thm}\label{Ric:TT} 
Let $(M^{n_0+n_1},g)$ ($n_0,n_1\geq 3$) be the product of two closed Einstein manifolds $(M^{n_0}_0,g_0)$ and $(M^{n_1}_1,g_1)$ with respective Einstein constants $\lambda_0,$ $\lambda_1$. If $(M,g)$ is a critical metric of $\mathcal{R}ic$ then there exists a constant $C>0$ such that for any TT-tensor $h$ on $M$,
\be \label{eqn:ric-TT}
\langle (\nabla\mathcal{R}ic)'(h), h \rangle_{L^2} \geq C\|h\|^2 
\ee
if and only if the Hessian of $\mathcal{R}ic$ restricted to the TT-tensors on $(M_i,g_i)$ satisfies the same condition for each $i=0,1$.
\end{thm}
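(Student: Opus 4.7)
The plan is to exploit the orthogonal decomposition
$$h = h_0 + \hat h + h_1 + c\,(n_1 g_0 - n_0 g_1)$$
of any TT-tensor $h$ on $M$, where $h_i$ is a TT-tensor tangent to $M_i$, $\hat h$ is a TT-tensor supported only on mixed two-planes, and $c \in \mathbb{R}$, and to organize the argument in three stages: (i) show that $H_{Ric}$ is block-diagonal in this decomposition; (ii) identify the two pure-factor blocks with the intrinsic Hessians of $\mathcal{R}ic$ on $M_0$ and $M_1$; (iii) verify that the mixed and parallel blocks are automatically coercive as soon as $|\lambda_0|=|\lambda_1|$. Once these are in hand, the equivalence of the two stability statements follows immediately.

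For (i), I would split $\nabla\mathcal{R}ic$ into its operator constituents $\delta^D d^D r$, $D^*D r$, $r\circ r$ and the scalar-times-$g$ terms, and apply the first variation formulas (\ref{variation1}). Because $r$, $g_0$ and $g_1$ are all parallel and $M$ is a metric product, each of these operators preserves the type of support (pure-$M_i$, mixed, or parallel); Fubini over $M_0\times M_1$ then forces all cross inner products between distinct summands of the decomposition to vanish. For (ii), consider $h_0$ tangent to $M_0$ and extended trivially across $M_1$: then $D h_0 = D^{M_0} h_0$, $\stackrel{\circ}{R}(h_0) = \stackrel{\circ}{R}_{g_0}(h_0)$ since the product curvature vanishes on mixed planes, and $r\circ h_0 + h_0\circ r = 2\lambda_0 h_0$ because the $\lambda_1 g_1$-summand of $r$ annihilates $M_0$-tangent tensors. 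Substituting into the formula for $\nabla\mathcal{R}ic$ and using $|\lambda_0|=|\lambda_1|$ to treat the $r\circ r$ and $\|r\|^2 g$ contributions, I expect to obtain
$$\big\langle (\nabla\mathcal{R}ic)'_g(h_0),\, h_0 \big\rangle_{L^2(M)} \;=\; \mathrm{Vol}(M_1)\,\big\langle (\nabla\mathcal{R}ic)'_{g_0}(h_0),\, h_0 \big\rangle_{L^2(M_0)},$$
with the analogous identity for $h_1$. This gives both directions of the equivalence on the pure-factor blocks.

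For (iii), I would decompose $\hat h = \sum_k \alpha^{(k)}_0 \odot \alpha^{(k)}_1$ into eigen-summands with coclosed $\alpha^{(k)}_i$, and combine Lemmas \ref{lem1}, \ref{lem2}, \ref{lem3} to rewrite $\big\langle (\nabla\mathcal{R}ic)'(\hat h), \hat h\big\rangle_{L^2}$ as a sum of quadratic forms in the Laplace eigenvalue pairs $(\mu^{(k)}_0, \mu^{(k)}_1)$ coming from the two factors. The Bochner-Weitzenb\"ock identity (\ref{eqn-BW}) applied to each coclosed $\alpha^{(k)}_i$ on the Einstein factor gives $\mu^{(k)}_i \ge \lambda_i$ when $\lambda_i > 0$ and controls the eigenvalue from below in the opposite case; combined with $|\lambda_0|=|\lambda_1|$ this forces each quadratic form to be strictly positive, yielding $\big\langle (\nabla\mathcal{R}ic)'(\hat h), \hat h\big\rangle_{L^2} \ge C\|\hat h\|^2$. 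The parallel direction $c\,(n_1 g_0 - n_0 g_1)$ satisfies $D^*D(\cdot)=0$ and $\delta^D d^D(\cdot)=0$ outright, so only algebraic terms survive and a short computation confirms a strict positive sign at a critical metric.

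The hardest part will be (iii): after assembling Lemmas \ref{lem1}--\ref{lem3} I must verify that the resulting quadratic form in $(\mu_0,\mu_1)$ stays bounded away from zero on the joint spectrum, and this is precisely where the hypothesis $n_0, n_1\ge 3$ together with the sign pattern $|\lambda_0|=|\lambda_1|$ becomes essential. Step (i), by contrast, is conceptually routine but computationally delicate: the interaction between the operator constituents of $\nabla\mathcal{R}ic$ and the incompatible index patterns of the four summands requires careful book-keeping before one can confirm that all cross pairings indeed vanish.
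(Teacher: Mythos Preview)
Your proposal is correct and follows essentially the same route as the paper: decompose TT-tensors on the product into pure-factor, mixed, and parallel summands, establish block-diagonality of the Hessian, identify the pure-factor blocks with the intrinsic Hessians on $(M_i,g_i)$, and verify coercivity on the mixed block by combining Lemmas~\ref{lem1}--\ref{lem3} with the Bochner--Weitzenb\"ock identity (the paper carries this out by splitting into the two cases $\lambda_0=\lambda_1$ and $\lambda_0=-\lambda_1$). You are in fact slightly more careful than the paper in that you explicitly address the parallel direction $\mathbb{R}\cdot(n_1g_0-n_0g_1)$, which the paper's own proof passes over in silence.
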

\begin{proof} 
Let $H$ denote the Hessian of $\mathcal{R}ic$ at $g$. Using Lemmas \ref{lem1},\ref{lem2} and \ref{lem3}, it is easy to see that the decomposition given by  (\ref{eqn:decomposition}) is orthogonal with respect to $H$. Therefore, it is sufficient to prove (\ref{eqn:ric-TT}) component-wise.

The standard projection map from $M$ to $M_i$ induce an isomorphism between $S^2_iM$ to the space of TT-tensors on $M_i$ for each $i.$  For any $h\in S^2_1M$ there exists a two parameter family of metrics $g_{s,t}$ on $M_0$ with  $g_{0,0}=g_0$ such that 
$$H(h,h)=\frac{\partial}{\partial s}\frac{\partial}{\partial t} \mathcal{R}ic(g_{s,t}+g_2)_{|(0,0)}.$$
Using Riemannian product structure we see that $H$ restricted to $S^2_1M$ has a positive lower bound if and only if the Hessian of $\mathcal{R}ic$ on $(M_1,g_1)$ acting on TT -tensors satisfies (\ref{eqn:ric-TT}). Similar argument holds for $S^2_2M$. 

Let $\alpha_0$ and $\alpha_1$ be two divergence free $1$-forms on $M_0$ and $M_1$ with $\|\alpha_0\|=\|\alpha_1\|=1$. Define, $h=\alpha_0\odot \alpha_1$. The set of all two tensors of this type is dense in $S^2_0M$. Therefore to prove that $H$ satisfies (\ref{eqn:ric-TT}) on $S^2_0M$, it is sufficient to prove that $H$ satisfies the same for tensors of type $\alpha_0\odot \alpha_1.$  The following possibilities arise. 
\\
\\
\textit{Case} $(\lambda_0=-\lambda_1=\lambda>0)$: If $\mu_0$ denotes the first eigenvalue of the laplacian on $\Omega^1(M_0)$. (\ref{eqn-BW}) implies that $\mu_0 >\lambda.$ Now using Lemmas \ref{lem1},~\ref{lem2} and \ref{lem3} we have,
\Bea \langle (\nabla\mathcal{R}ic)'(h), h \rangle_{L^2} &=& \langle (\delta^Dd^Dr)'(h)- (D^*Dr)'(h)-2(r\circ r)'(h), h \rangle_{L^2}+ \frac{2}{n_0+n_1} \|r \|^2 \|h \|^2\\
&=&\| \la \alpha_0 \|^2 + \|\la \alpha_1 \|^2 +2 \langle \la \alpha_0, \alpha_0  \rangle_{L^2} \langle \la \alpha_1, \alpha_1 \rangle_{L^2} -\lambda\langle \la \alpha_0, \alpha_0 \rangle_{L^2} +\lambda \langle \la \alpha_1, \alpha_1 \rangle_{L^2}
\Eea
Since,
\Bea \| \la \alpha_0 \|^2- \lambda \langle \la \alpha_0, \alpha_0  \rangle_{L^2} 
&=& \|\alpha_0-\lambda \alpha_0\|^2+\lambda \langle \la \alpha_0-\lambda \alpha_0,\alpha_0\rangle_{L^2} \geq \mu(\mu-\lambda)>0 
\Eea
and the remaining terms in the expression of $\langle (\nabla\mathcal{R}ic)'(h), h \rangle$ are non-negative, the proof follows.
\\
\\
\textit{Case} $(\lambda_0=\lambda_1=\lambda)$ : Let $\lambda$ be the Einstein constant of both $(M_i,g_i)$. From Lemmas \ref{lem1},~\ref{lem2} and \ref{lem3} we have,
\Bea 
\langle (\nabla\mathcal{R}ic)'(h), h \rangle_{L^2} &=&
\| \la \alpha_0 \|^2 + \|\la \alpha_1 \|^2 +2 \langle \la \alpha_0, \alpha_0  \rangle_{L^2} \langle \la \alpha_1, \alpha_1 \rangle_{L^2}+ 8\lambda^2-5\lambda \left(\langle \la \alpha_0, \alpha_0 \rangle_{L^2} + \langle \la \alpha_1, \alpha_1 \rangle_{L^2}\right)
\Eea
Clearly $\lambda<0$, then the proof follows. When $\lambda>0$ then using (\ref{eqn-BW}) we have, 
\Bea \langle (\nabla\mathcal{R}ic)'(h), h \rangle_{L^2} &=& \|D^*D\alpha_0\|^2+\|D^*D\alpha_1\|^2-\lambda\|D\alpha_0\|^2-\lambda\|D\alpha_1\|^2+12\lambda^2\\
&=&\|D^*D\alpha_0-\frac{\lambda}{2}\alpha_0\|^2 +\|D^*D\alpha_1-\frac{\lambda}{2}\alpha_1\|^2+(12-\frac{1}{4})\lambda^2\\
&&> 11 \lambda^2 
\Eea
\end{proof}
\subsection{Conformal Variations:} 
In this section we study $H$ restricted to $C^{\infty}M_1.g_0+C^{\infty}M_0.g_1.$ Let $\mathcal{M}(g)$ denote the space of all unit volume metrics conformal to $g.$ The tangent space of $\mathcal{M}(g)$ consists of $2$-tensors of the form $fg$ with $\int_M fdv_g=0.$
\begin{lem}\label{lem6} 
Let $(M,g)$ be the product of two closed Einstein manifolds $(M_0^{n_0},g_0)$ and $(M_1^{n_1},g_0)$ with respective Einstein constants $\lambda_0$ and $\lambda_1$ such that $|\lambda_i|=\lambda.$ Let $h=fg_i$ where $f\in C^{\infty}M_{i+1}$. Then
\Bea
\langle (\nabla\mathcal{R}ic)'(h), h\rangle_{L^2} =\begin{cases}
\frac{n_i(n_i+1)}{2}\|\la f\|^2-\frac{\lambda_i n_i(n_i+2)}{2}\|df\|^2- \lambda^2 n_i(n_i-4) \|f\|^2, \text{ if } \lambda_0=-\lambda_1;\\
\frac{n_i(n_i+1)}{2}\|\la f\|^2+\frac{\lambda n_i(n_i-6)}{2}\|df\|^2- \lambda^2 n_i(n_i-4)\|f\|^2, \text{ if } \lambda_0=\lambda_1.
\end{cases}
\Eea
\end{lem}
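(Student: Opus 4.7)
The plan is to compute $\langle (\nabla\mathcal{R}ic)'(fg_i), fg_i\rangle_{L^2}$ by linearizing each piece of the explicit expression
\Bea
\nabla\mathcal{R}ic(g) = \delta^Dd^Dr - D^{*}Dr - 2r\circ r + \tfrac{1}{2}(\la s)g + \tfrac{1}{2}|r|^2 g + \bigl(\tfrac{2}{n}-\tfrac{1}{2}\bigr)\|r\|^2 g
\Eea
recalled at the start of Section 3, pairing the result against $h = fg_i$, and then assembling the six contributions. The first three summands are already done: Lemmas \ref{lem1}(i), \ref{lem2}(i), and \ref{lem3}(i) produce $\langle (\delta^Dd^Dr)'(h),h\rangle_{L^2}$, $\langle (D^{*}Dr)'(h),h\rangle_{L^2}$, and $\langle (r\circ r)'(h),h\rangle_{L^2}$, and summing them with the correct signs yields a preliminary expression $\tfrac{n_i}{2}\|\la f\|^2 - n_i(2\lambda_i+\lambda_{i+1})\|df\|^2 + 2\lambda_i^2 n_i \|f\|^2$.

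For the three scalar-multiple-of-$g$ pieces I would exploit that $s_g$ and $|r_g|^2$ are constant on any Riemannian product of Einstein manifolds, so at the base metric $\la s = 0$ and $|r|^2 = \lambda_0^2 n_0 + \lambda_1^2 n_1 = \lambda^2(n_0+n_1)$ in both sign cases of the hypothesis $|\lambda_i|=\lambda$. The calculation hinges on the following identities for $h = fg_i$ with $f\in C^{\infty}M_{i+1}$: $tr(h) = n_i f$, $\dg(fg_i) = 0$ (since $f$ is constant along $M_i$ and $g_i$ vanishes on $M_{i+1}$-directions), $(r,h) = \lambda_i n_i f$, $\Rc(fg_i) = \lambda_i fg_i$, $r\circ g_i = \lambda_i g_i$, and $(r, Ddf) = -\lambda_{i+1}\la f$ (since $Ddf$ is supported on the $M_{i+1}$-block of the product). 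Feeding these into (iii) and (iv) of (\ref{variation1}) gives $s'(fg_i) = n_i(\la f - \lambda_i f)$ and $r'(fg_i) = \tfrac{1}{2}((\la f)g_i - n_i Ddf)$, and then the identity $(|r|^2)'(h) = -2(r\circ r, h) + 2(r, r'(h))$ yields $(|r|^2)'(fg_i) = -2\lambda_i^2 n_i f + n_i(\lambda_i+\lambda_{i+1})\la f$.

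Pairing the variations of $\tfrac{1}{2}(\la s)g$, $\tfrac{1}{2}|r|^2 g$, and $(\tfrac{2}{n}-\tfrac{1}{2})\|r\|^2 g$ with $h$, using the zero-mean condition $\int_M f\,dv_g = 0$ that places $fg_i$ in the constant-volume tangent space of Section 4.2 (which kills the cross-term $(\|r\|^2)'(h)\int_M tr(h)\,dv_g$), adds three further summands. After substituting $|r|^2 = \lambda^2 n$ the six contributions combine cleanly: the $\|\la f\|^2$ coefficient becomes $\tfrac{n_i(n_i+1)}{2}$ in both cases, the $\|f\|^2$ coefficient collapses to $-\lambda^2 n_i(n_i-4)$ uniformly, while the $\|df\|^2$ coefficient $\tfrac{\lambda_{i+1} n_i(n_i-2)}{2} - 2\lambda_i n_i$ specializes to $-\tfrac{\lambda_i n_i(n_i+2)}{2}$ when $\lambda_{i+1}=-\lambda_i$ and to $\tfrac{\lambda n_i(n_i-6)}{2}$ when $\lambda_{i+1}=\lambda_i = \lambda$, giving the two stated formulas. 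The main obstacle is simply disciplined bookkeeping: the intermediate quantities mix $\lambda_i$ and $\lambda_{i+1}$ asymmetrically through the identity $(r,Ddf)=-\lambda_{i+1}\la f$ and the global constant $|r|^2$, and carefully tracking where the unit-volume / zero-mean normalization is used is what allows the two sign cases to coalesce into the clean closed forms above.
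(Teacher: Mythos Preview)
Your proposal is correct and follows essentially the same route as the paper: use Lemmas~\ref{lem1}(i), \ref{lem2}(i), \ref{lem3}(i) for the three curvature-operator terms, compute $(|r|^2)'(h)$ and $\la s'(h)$ directly from the first-variation formulae, and then assemble the pieces using $|r|^2=\lambda^2 n$ and the zero-mean condition $\int_M f\,dv_g=0$. If anything you are slightly more explicit than the paper (e.g.\ writing out $r'(fg_i)=\tfrac12((\la f)g_i-n_i Ddf)$ and isolating $(r,Ddf)=-\lambda_{i+1}\la f$), but the strategy and the intermediate identities match exactly.
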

\begin{proof} Note that in this case,
\Bea
\langle (\nabla\mathcal{R}ic)'(h), h \rangle_{L^2} &=& \langle ({\delta}^Dd^Dr)'(h) -(D^{*}Dr)'(h)-2(r\circ r)'(h),h\rangle_{L^2} + \frac{1}{2}\langle(\la s'(h))g +(|r|^2)'(h)g, h\rangle_{L^2}\\
&& +2\lambda^2\|h\|^2 
\Eea
In local coordinates $|r|^2$ can be written as $|r|^2=r_{ij}r_{kl}g^{ik}g^{jl}.$
Differentiating the expression we have,
\begin{eqnarray*}
(|r|^2)'(h)&=& r'(h)_{ij}r_{kl}g^{ik}g^{jl}+ r_{ij}r'(h)_{kl}g^{ik}g^{jl}+r_{ij}r_{kl}((g^{ik})'g^{jl}+g^{ik}(g^{jl})')\\
&=&2r'(h)_{ij}r_{ij}-r_{ij}r_{kj}h_{ik}-r_{ij}r_{il}h_{jl}\\
&=& n_i((\lambda_0+\lambda_1) \la f- 2\lambda_i^2f).
\end{eqnarray*}
Also, since $s$ is a constant and $\langle g, h\rangle =0,$ one obtains $\langle (\la s)'(h) g ,h\rangle = \langle \la s'(h)g,h\rangle.$ Hence, 
$$s'(h)= -(r,h)+\la tr h=n_i(\la^2 f-\lambda_i \la f)$$
Therefore $\langle (\la s)'(h)g, h\rangle_{L^2}= n_i^2(\|\la f\|^2- \lambda_i\|df\|^2).$ 
Now the result follows combining Lemmas \ref{lem1}, ~\ref{lem2}, ~\ref{lem3} and the above expressions.
\end{proof}
\subsection*{ Proof of Theorem \ref{main1} :} Let $|\lambda_i|=\lambda>0$ and $f_i\in C^{\infty}M_{i+1}$ for $i=0,1$ (indices $\mod 2$). We notice that $H(f_0g_1,f_1g_0)=0$. Therefore, to prove stability of $\mathcal{R}ic$ restricted to $\mathcal{M}_W$ it sufficient to prove that for each $i=0,1$ there exists $C_i$ such that 
$$H(f_ig_{i+1},f_ig_{i+1})\geq C_i \ \ \forall f_i\in C^{\infty}(M_i).$$
\begin{enumerate} 
\item
\textit{Case} $(\lambda_0=\lambda_1=\lambda) :$ If $\lambda_i>0$ then (\ref{eqn-BW}) implies that $\mu_i\geq \lambda $. Then the result follows from Lemma \ref{lem6} using (\ref{eqn-BW}). When  $\lambda_i<0$, consider the polynomial $p(a,x)=(a+1)x^2-(a-6)x-2(a-4)$. When $a=3$ or $4$, $p(a,x)>0$ for all $x>0$. When $a>4$ then
$$p(a,x)>0   \ {\rm for} \ \ x>c(a)=\frac{(a-6)+\sqrt{9a^2-36a+4}}{2(a+1)}.$$
Let $f_i$ be an eigenfunction corresponding to the first eigenvalue $\mu_i$ of $\la$. Lemma \ref{lem6} implies that $H(f_ig_{i+1},f_ig_{i+1})>0$ if and only if  $p(n_i,\frac{\mu_{i+1}}{\lambda})>0$. Therefore, if $\frac{\mu_{i+1}}{\lambda}>c(n_i)$ then $H(f_ig_{i+1},f_ig_{i+1})>0$. Hence the proof follows. 
\item
\textit{Case} $(\lambda_0=-\lambda_1) :$ $H(f_0g_1,f_0g_1)$ is positive since $\frac{\mu_0}{\lambda}>1.$ If $\frac{\mu_1}{\lambda} >  \frac{(n_1+2)+\sqrt{9n_1^2-20n_1-28}}{2(n_1+1)}$ then $H(f_1g_0,f_1g_0)$ is positive by similar argument as in the previous case.
\end{enumerate}
\subsection*{Proof of Theorem \ref{main2}} As a consequence of Theorem \ref{Ric:TT} and Theorem \ref{main1}, it is sufficient to prove stability of $\mathcal{R}ic$ restricted to conformal variations of $(g_0+g_1)$. Since $C^{\infty} M_0\oplus C^{\infty}M_1$ is dense in $C^{\infty}M$ we study $H$ restricted to $\{f_0g_0+f_1g_1 : f_i\in C^\infty(M_i), i=0,1\}$. Notice that $H(f_0g_0, f_1g_1)=0.$ As each $(M_i,g_i)$ is stable for $\mathcal{R}ic$, there exist positive constant $C_i>0$ such that for each $i=0,1$
$$H(fg_i,fg_i)\geq C_i \ \ \forall f\in C^{\infty}M_i \ {\rm with } \ \int_{M_i}f=0$$ 
Hence the theorem follows . 
\subsection*{Examples of Stable manifolds:} Some known examples of stable critical metrics of $\mathcal{R}ic$ are quotients of $S^n$ ($n\geq 3$), the complex projective space $\mathbb{C}P^m$, $m\geq 2$, compact hyperbolic manifolds $H^n$ with dimensions $3,4$, and compact hyperbolic manifolds $H^n$ ($n>4$) with first eigenvalue $\mu> \frac{2(n-1)(n-4)}{n}$ (see \cite{GV}). As a direct consequence of Theorem \ref{main2}, one observes that the following product manifolds are stable for $\mathcal{R}ic$.
\begin{enumerate}
\item $S^{n_1}\times S^{n_2}....\times S^{n_k}$ ($n_i\geq 3, \ i=1,2,..,k$)
\item $\mathbb{C}P^{n_1} \times \mathbb{C}P^{n_2}...\times \mathbb{C}P^{n_k}$ (for $n_i\geq 2, \ i=1,2,..,k$)
\item $H_1\times H_2\times...\times H_k$ where $H_i$ is a compact hyperbolic manifold with dimension $3$ or $4$ for $i=1,2,..,k.$ 
\item Product of Riemannian manifolds stated in 1,2,3.
\item Let $H_1^n, H_2^m$ ($n,m\geq 4$) be two compact hyperbolic manifolds  with first eigenvalues of Laplacian $\mu_1$ and $\mu_2$ respectively. The product manifold $H_1^n \times H_2^m$ is stable if and only if 
\Bea
&&\frac{\mu_1}{n-1}>\max \{ c(m), 2-\frac{8}{n} \} \text{ and} \ \frac{\mu_2}{m-1}>\max \{c(n),2-\frac{8}{m}\}
\Eea
i.e. in particular, if $\mu_1>2(n-1)$ and $\mu_2>2(m-1)$ then $H_1^n, \times H_2^m$ is stable. 
\item Let $H^m$ be a compact hyperbolic manifold. The product manifold $S^n\times H^m$ ($n,m\geq 3$, one of them bigger than $4$) is stable for $\mathcal{R}ic$ if and only if
$\frac{\mu}{m-1}>\max \left\{\frac{2(m-4)}{m}, \frac{(n+2)+\sqrt{9n^2-20n-28}}{2(n+1)}\right \}$, where $\mu$ is the first eigenvalue of the Laplacian on $H^m.$ 
\item $S^n\times H^m$ ($n,m\geq 3$) is stable if $\mu> 2(m-1)$.
\end{enumerate}  
\subsection*{Unstable manifolds :}
Let $H_1^n, H_2^m$ ($n,m\geq 4$) be compact hyperbolic space forms both stable for $\mathcal{R}ic$, then their Riemannian product $H_1^n \times H_2^m$ is unstable if 
\Bea
\text{either } 2- \frac{8}{n} <\frac{\mu_1}{n-1} < c(m) \ \ \text{ or } \ \ 2- \frac{8}{m}< \frac{\mu_2}{m-1}<c(n)
\Eea
where $\mu_i, c$ ($i=1,2$) are as in Theorem \ref{main1}.\\
Next, consider $M=S^n\times H^m$ such that the first eigenvalue $\mu$ of $\la$ of $H^m$ satisfies
$$\frac{2(m-4)}{m}<\frac{\mu}{m-1}<\frac{(n+2)+\sqrt{9n^2-20n-28}}{2(n+1)}.$$
Although $S^n$ and $H^m$ both are individually stable for $\mathcal{R}ic$, their product  $S^n\times H^m$ is unstable. For existence of hyperbolic manifolds with small eigenvalues we refer to \cite{RS}. 
\section{Second variations of other quadratic functionals}
\subsection{Stability of $\mathcal{F}_t$:}
In this section, we consider the Riemannian functional $\mathcal{F}_t$ and study its stability at the product of two closed Einstein manifolds. First, we describe the second variation of $\mathcal{F}_t$ at such critical points.
\begin{lem}\label{sv-F_t-general}
Let $(M^n,g)$ (where $n=n_0+n_1$) be the product of closed Einstein manifolds $(M_0^{n_0},g_0)$ and $(M_1^{n_1},g_1)$ ($n_0,n_1\geq 3$) with respective Einstein constants $\lambda_0$ and $\lambda_1$.
\begin{enumerate}
\item Let $h=\alpha_0\odot \alpha_1$ as in Lemma \ref{lem1}, one obtains
\Bea
&&\langle (\nabla\mathcal{F}_t)'(h), h \rangle_{L^2} =
\begin{cases}
\langle (\nabla\mathcal{R}ic)'(h), h \rangle_{L^2} ,  \text{ if }  \lambda_0 =-\lambda_1, \text{ and }n_0=n_1;\\
\| \la \alpha_0 \|^2 + \|\la \alpha_1 \|^2 +2 \langle \la \alpha_0, \alpha_0  \rangle_{L^2} \langle \la \alpha_1, \alpha_1 \rangle_{L^2} + 4\lambda^2\left( 2+tn \right)\\
\;\;\;\;\;\;\;\;\;\;\;\;\;\;\;-\lambda\left(5+2tn\right)\left(\langle \la \alpha_0, \alpha_0 \rangle_{L^2} +\langle \la \alpha_1, \alpha_1 \rangle_{L^2}\right);  \text{ if }  \lambda_0 =\lambda_1=\lambda.
\end{cases}
\Eea
\item For $h=fg_i$ as in Lemma \ref{lem1}, one obtains
\Bea
&&\langle (\nabla\mathcal{F}_t)'(h), h \rangle_{L^2}\\ 
&&=\begin{cases}
\frac{m((4t+1)m+1)}{2}\|\la f\|^2 - \lambda_i\frac{m((8t+1)m+2)}{2}\|df\|^2 + \lambda^2 m (m(2t-1)+4) \|f\|^2, \\
\; \;\; \;\; \;\; \;\; \;\; \;\; \;\; \;\; \;\; \;\; \;\; \; \; \;\; \;\; \;\; \;\; \;\; \;\;\; \;\; \;\; \;\; \;\; \;\; \;\; \;\; \;\; \;\; \;\; \;\; \; \; \;\; \;\; \;\; \;\; \;\; \;\;\;\; \;\; \; \; \;\;\text{ if } \lambda_0=-\lambda_1=\lambda, n_0=n_1=m;\\
\frac{n_i((4t+1)n_i+1)}{2}\|\la f\|^2+\lambda \frac{n_i}{2}(2t(n(n_i-1)-4n_i)+n_i-6)\|df\|^2
\\
\; \;\; \;\; \;\; \;\; \;\; \;\; \;\; \;\; \;\; \;\; \;\; \; \; \;\; \;\; \;\;  -\lambda^2n_i(t(n(n_i-2)-2n_i)+n_i-4)\|f\|^2,\text{                                                 if } \lambda_0=\lambda_1=\lambda.
\end{cases}
\Eea  
\end{enumerate}
\end{lem}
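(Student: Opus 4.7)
\emph{Proof plan.} Since $\mathcal{F}_t=\mathcal{R}ic+t\,\mathcal{S}$, we have
\[
\langle (\nabla\mathcal{F}_t)'(h),h\rangle_{L^2}=\langle(\nabla\mathcal{R}ic)'(h),h\rangle_{L^2}+t\,\langle(\nabla\mathcal{S})'(h),h\rangle_{L^2}.
\]
The $\mathcal{R}ic$-piece is already packaged: for $h=\alpha_0\odot\alpha_1$ it appears inside the proof of Theorem~\ref{Ric:TT}, and for $h=fg_i$ in Lemma~\ref{lem6}. The entire task therefore reduces to computing the $\mathcal{S}$-piece at a product of Einstein manifolds and collecting coefficients.

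I would first derive a master formula for $\langle(\nabla\mathcal{S})'(h),h\rangle_{L^2}$ valid at any Einstein product. Because $s$ is constant on such a background, $ds=0$ and $\la s=0$, so $(Dds)'(h)=Dd(s'(h))$ and $((\la s)g)'(h)=\la(s'(h))\,g$. Two integrations by parts give $\langle Dd\phi,h\rangle=\langle\phi,\dg^2 h\rangle$ and $\langle(\la\phi)g,h\rangle=\langle\phi,\la\,\mathrm{tr}\,h\rangle$, and feeding in the first variation $s'(h)=\la\,\mathrm{tr}\,h+\dg^2 h-(r,h)$ from Section~2 makes the cross term $2\langle s'(h),(r,h)\rangle$ coming from these pieces cancel against the corresponding one in the linearisation of $-2sr$. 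After using $\int\mathrm{tr}\,h\,dv=0$ on the $\|s\|^2 g$ normaliser, the outcome is
\[
\langle(\nabla\mathcal{S})'(h),h\rangle_{L^2}=2\|s'(h)\|^2-2s\langle r'(h),h\rangle_{L^2}+s\!\int\! s'(h)\,\mathrm{tr}\,h\,dv+\tfrac{s^2}{2}\|h\|^2+\bigl(\tfrac{2}{n}-\tfrac12\bigr)\|s\|^2\|h\|^2.
\]

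Then I specialise. For $h=\alpha_0\odot\alpha_1$, the mixed-plane structure forces $(r,h)=0$ (so $s'(h)=0$) and $\Rc(h)=0$ (the curvature of a Riemannian product vanishes on mixed two-planes), whence $r'(h)=\tfrac12(D^*Dh+r\circ h+h\circ r)$ can be expanded using~(\ref{eqn-BW}). When $\lambda_0=-\lambda_1$ and $n_0=n_1$, one has $s=0$ and every surviving term in the master formula vanishes, giving the first line of~(1). When $\lambda_0=\lambda_1=\lambda$, the product is itself Einstein with $r=\lambda g$ and the residue assembles into $4n\lambda^2-2n\lambda(\langle\la\alpha_0,\alpha_0\rangle+\langle\la\alpha_1,\alpha_1\rangle)$, which added to the $\mathcal{R}ic$-expression from the proof of Theorem~\ref{Ric:TT} produces the second line. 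For $h=fg_i$, one verifies $s'(fg_i)=n_i(\la f-\lambda_i f)$, and---using $D^*D(fg_i)=(\la f)g_i$ together with the product-manifold identity $\Rc(g_i)=\lambda_i g_i$ (which makes $r\circ h+h\circ r$ cancel $2\Rc(h)$)---obtains $r'(fg_i)=\tfrac12((\la f)g_i-n_i Ddf)$ and $\langle r'(fg_i),fg_i\rangle_{L^2}=\tfrac{n_i}{2}\|df\|^2$. Substituting into the master formula and combining with Lemma~\ref{lem6} yields~(2).

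The main obstacle is coefficient bookkeeping in the Einstein subcase of~(2). The $\|df\|^2$-coefficient receives three separate contributions---$-4\lambda n_i^2$ from $2\|s'(h)\|^2$, $-\lambda n n_i$ from $-2s\langle r'(h),h\rangle$, and $+\lambda n n_i^2$ from $s\!\int s'(h)\,\mathrm{tr}\,h\,dv$---that must collapse to exactly $\lambda n_i(n(n_i-1)-4n_i)$; similarly the $\|f\|^2$-coefficient receives contributions from $2\lambda^2 n_i^2$, $\tfrac{s^2}{2}\|h\|^2$, and $(\tfrac{2}{n}-\tfrac12)\|s\|^2\|h\|^2$ that must combine to $-\lambda^2 n_i(n(n_i-2)-2n_i)$. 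Keeping the $\|s\|^2$-normalisation consistent (taking $\mathrm{Vol}(M)=1$, so that $\|s\|^2=s^2=n^2\lambda^2$) is the delicate check that makes these numerics line up cleanly.
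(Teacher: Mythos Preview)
Your proposal is correct and follows the same route as the paper: compute $\langle(\nabla\mathcal{S})'(h),h\rangle_{L^2}$ (you via a unified master formula, the paper term-by-term in \eqref{scal1} and \eqref{scal2}) and add $t$ times it to the $\mathcal{R}ic$-expressions from Theorem~\ref{Ric:TT} and Lemma~\ref{lem6}. One minor bookkeeping slip in your closing paragraph: the $\|f\|^2$-coefficient also receives $-n\lambda^2 n_i^2$ from the $s\!\int s'(h)\,\mathrm{tr}\,h\,dv$ term (which you correctly listed among the $\|df\|^2$-contributions but omitted here), and only with this fourth contribution do the pieces collapse to $-\lambda^2 n_i(n(n_i-2)-2n_i)$.
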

\begin{proof} Let $h=\alpha_0\odot \alpha_1$ such that $\|\alpha_0\|=1=\|\alpha_1\|$. Using (\ref{eqn-BW}) we have
\bea \label{scal1}
\nonumber \langle (\nabla\mathcal{S})'(h), h \rangle_{L^2} &=& \frac{2s^2}{n}\|h\|^2-2s \langle r'(h),h\rangle_{L^2} \\
\nonumber &=& \frac{4(n_0\lambda_0+n_1\lambda_1)^2}{n}-2(n_0\lambda_0+n_1\lambda_1)(\langle \la \alpha_0, \alpha_0 \rangle_{L^2} +\langle \la \alpha_1, \alpha_1 \rangle_{L^2})\\
&=&\begin{cases} 0, \; \; \; \; \; \; \; \; \; \; \; \; \; \; \; \; \; \; \; \; \; \; \; \; \; \; \; \; \; \; \; \; \; \; \; \; \; \; \; \; \; \; \; \; \; \; \; \;    {\rm if} \,\; \; \lambda_1=-\lambda_0,n_0=n_1;\\
2\lambda n(2\lambda -\langle\la \alpha_0, \alpha_0 \rangle_{L^2} -\langle \la \alpha_1, \alpha_1 \rangle_{L^2} ),\; {\rm if} \,\; \; \lambda_0=\lambda_1=\lambda.
\end{cases}
\eea
Consequently, using \eqref{scal1} and Theorem \ref{Ric:TT}, proof of (1) follows.\\
Let $h=fg_i$. Using similar computations as in Lemma \ref{lem6} for we have 
\Bea
&&\langle s'(h)r,h\rangle_{L^2} = n_i^2\lambda_i(\|df\|^2-\lambda_i\|f\|^2),\
\langle s.r'(h),h\rangle_{L^2} = sn_i\|df\|^2, \ \langle s.s'(h)g,h\rangle_{L^2} = sn_i^2(\|df\|^2-\lambda_i\|f\|^2)
\Eea
\noindent which implies that
\bea\label{scal2}
\nonumber &&\langle (\nabla\mathcal{S})'(h), h \rangle_{L^2} = \langle 2\la (s'(h))g -2s'(h)r-2s.r'(h)+s.s'(h)g, h\rangle_{L^2} +\frac{2n_is^2}{m+n}\|f\|^2\\
 &&=\begin{cases}
2m^2(\|\la f\|^2 - 2\lambda_i \|df\|^2 + \lambda^2  \|f\|^2), \text{ if } \lambda_0=-\lambda_1=\lambda \text{ and } n_0=n_1=m;\\
2n_i^2\|\la f\|^2+\lambda (nn_i(n_i-1)- 4n_i^2)\|df\|^2 + n_i\lambda^2 \left(2n_i-n (n_i-2)\right)\|f\|^2,\\ \,\,\,\,\,\,\,\,\,\,\,\,\,\,\, \,\,\,\,\,\,\,\,\,\,\,\,\,\,\,\,\,\,\,\,\,\,\,\,\,\,\,\,\,\,\,\,\,\,\,\,\,\,\,\,\,\,\,\,\, \,\,\,\,\,\,\,\,\,\,\,\,\,\,\,\,\,\,\,\,\,\,\,\,\,\,\,\,\,\,\,\,\,\,\,\,\,\,\,\,\,\,\,\,\, \,\,\,\,\,\,\,\,\,\,\,\,\,\,\,\,\,\,\,\,\,\,\,\,\,\,\,\,\,\,\,\,\,\,\,\,\,\,\,\,\,\,\,\,\ \text{    if } \lambda_0=\lambda_1=\lambda.
\end{cases}
\eea
Now (2) follows from combining \eqref{scal2} and Lemma \ref{lem6}.
\end{proof}
Note that if $\lambda_0=-\lambda_1$ then $(M,g)$ is a critical point of $\mathcal{F}_t$ for all $0\neq t\in \mathbb{R}$ if and only if $n_0=n_1$. The following result analyses the stability of $\mathcal{F}_t$ for such critical points.
\begin{thm} \label{st-F_t-opposite}
Let $(M^{2n},g)$ be a critical point of $\mathcal{F}_t$ which is the product of closed Einstein manifolds $(M_0^{n},g_0)$ and $(M_1^{n},g_1)$ ($n\geq 3$) with respective Einstein constants $\lambda>0$ and $-\lambda$. Let for each $i=0,1$, $\mu_i$ denote the first eigenvalue of the Laplacian on $(M_i,g_i).$ Then 
$(M,g)$ is stable for $\mathcal{F}_t$ if and only if each $(M_i,g_i)$ is stable for the same and one of the following conditions holds:
\Bea
&(1).& n=3, \text{ and } t >-\frac{1}{3};\\
&(2).&4 \leq n\leq 21 \text{ and }
\begin{cases} \frac{-(9n^2-20n-28)}{4n(8n-7)} >t >\frac{-(n+1)}{4n};\\
n= 4, \ t \geq \frac{-(9n^2-20n-28)}{4n(8n-7)} \text { with } 
\frac{\mu_{1}}{\lambda} > \frac{-b_{0}+ \sqrt{D}}{2((4t+1)n+1))};\\
n\geq 5, \begin{cases} -\frac{11}{16n} > t \geq \frac{-(9n^2-20n-28)}{4n(8n-7)} \text { with } \ \frac{\mu_{[i]}}{\lambda} > \frac{-b_{[i+1]}+ \sqrt{D}}{2((4t+1)n+1))};\\
t \geq -\frac{11}{16n} \ \text{ with }\frac{\mu_{1}}{\lambda} > \frac{-b_{0}+ \sqrt{D}}{2((4t+1)n+1))}.
\end{cases}
\end{cases}\\
&(2).&n\geq 22 \text{ and }
\begin{cases} t \geq -\frac{11}{16n}, \text{ with }  \frac{\mu_{1}}{\lambda} > \frac{-b_{0}+ \sqrt{D}}{2((4t+1)n+1))};\\ 
 -\frac{11}{16n} >t >\frac{-(n+1)}{4n}, \text{ with } \frac{\mu_{[i]}}{\lambda} > \frac{-b_{[i+1]}+ \sqrt{D}}{2((4t+1)n+1))}.
 \end{cases}
\Eea
where $b_0= -((8t+1)n+2)=-b_1, \ D= 4nt(8n-7)+ (9n^2-20n-28)$ and $[i]=i \mod 2$ .
\end{thm}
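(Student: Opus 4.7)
The plan is to follow the same architecture as in the proofs of Theorems \ref{Ric:TT}, \ref{main1} and \ref{main2} for $\mathcal{R}ic$, but with the functional $\mathcal{F}_t$. I start from the decomposition (\ref{eqn:decomposition2}) and observe, as in the proof of Theorem \ref{Ric:TT}, that $\nabla\mathcal{F}_t$ has no $\mathrm{Im}\,\dg^*$-component by diffeomorphism invariance, and that the four remaining summands $S^2_1M$, $S^2_0M$, $S^2_2M$ and $(C^\infty M)\cdot g_0+(C^\infty M)\cdot g_1$ are mutually orthogonal for the Hessian $H$ of $\mathcal{F}_t$. This orthogonality uses only parallelism of $g_0,g_1$ together with the explicit splitting already visible in Lemmas \ref{lem1}--\ref{lem4}, \ref{lem6}, and Lemma \ref{sv-F_t-general}. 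It therefore suffices to treat each summand separately.

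On $S^2_iM$, the projection $M\to M_i$ identifies $H|_{S^2_iM}$ with the Hessian of $\mathcal{F}_t$ on $(M_i,g_i)$ acting on TT-tensors, so positivity here is exactly the hypothesis that each $(M_i,g_i)$ is stable for $\mathcal{F}_t$. On $S^2_0M$ it is enough to treat the dense subspace of tensors $h=\alpha_0\odot\alpha_1$ with $\alpha_i$ a unit divergence-free $1$-form on $M_i$; Lemma \ref{sv-F_t-general}(1) specializes (using $\lambda_0=-\lambda_1$, $n_0=n_1$) to
\[
\langle(\nabla\mathcal{F}_t)'(h),h\rangle_{L^2}=\langle(\nabla\mathcal{R}ic)'(h),h\rangle_{L^2},
\]
which is positive by the $\lambda_0=-\lambda_1$ case of Theorem \ref{Ric:TT}. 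The only remaining direction is the conformal one $f_0g_0+f_1g_1$ with $\int_{M_i}f_i=0$. A short direct calculation (parallel to the one in the proof of Theorem \ref{main1}) gives $H(f_0g_0,f_1g_1)=0$, so the problem reduces to the positivity of $H(fg_1,fg_1)$ for $f\in C^\infty M_0$ (where $\lambda_i=-\lambda$) and of $H(fg_0,fg_0)$ for $f\in C^\infty M_1$ (where $\lambda_i=\lambda$). Expanding $f$ in eigenfunctions of $\la$ and applying Lemma \ref{sv-F_t-general}(2), the first becomes positivity of
\[
Q_+(x)=\tfrac{(4t+1)n+1}{2}\,x^2+\tfrac{(8t+1)n+2}{2}\,x+(n(2t-1)+4)
\]
at every $x=\mu/\lambda$ with $\mu$ an eigenvalue of $\la$ on $M_0$, and the second becomes positivity of
\[
Q_-(y)=\tfrac{(4t+1)n+1}{2}\,y^2-\tfrac{(8t+1)n+2}{2}\,y+(n(2t-1)+4)
\]
at every $y=\mu/\lambda$ with $\mu$ an eigenvalue of $\la$ on $M_1$.

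The Lichnerowicz inequality on the positively-curved factor $(M_0,g_0)$ yields $\mu_0/\lambda\ge n/(n-1)$, which together with $t>-(n+1)/(4n)$ (positivity of the leading coefficient) and the stability of $(M_0,g_0)$ for $\mathcal{F}_t$ forces $Q_+>0$ on its required range; the nontrivial condition is therefore the positivity of $Q_-(y)$ for every $y\ge\mu_1/\lambda$. For each pair $(n,t)$ this holds in one of two ways: either the discriminant $D=4nt(8n-7)+(9n^2-20n-28)$ is negative and the leading coefficient $(4t+1)n+1$ is positive, so that $Q_-$ is positive everywhere, or else $\mu_1/\lambda$ must exceed the larger root $(-b_0+\sqrt D)/(2((4t+1)n+1))$ with $b_0=-((8t+1)n+2)$. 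I expect the main obstacle to be the bookkeeping in the resulting case analysis: one has to track, as a function of $t$, (a) the sign of $(4t+1)n+1$, (b) the sign of $D$, and (c) whether the larger root of $Q_-$ sits above or below the threshold forced by the bound on $\mu_1$, while simultaneously checking the companion positivity of $Q_+$. The three $n$-ranges $n=3$, $4\le n\le 21$, $n\ge 22$ in the statement arise precisely from where the transitions in (a)--(c) occur relative to one another, and nailing down the explicit thresholds in each sub-case is the principal algebraic task of the proof.
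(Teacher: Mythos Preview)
Your architecture matches the paper's: reduce via the decomposition \eqref{eqn:decomposition2}, use Lemma \ref{sv-F_t-general}(1) together with Theorem \ref{Ric:TT} on $S^2_0M$, invoke the hypothesis of stability of each factor on $S^2_iM$, and then analyse the remaining warped-product directions $h=fg_i$ with $f\in C^\infty M_{i+1}$ via Lemma \ref{sv-F_t-general}(2), arriving at the quadratics you call $Q_\pm$ (the paper's $p_1,p_0$). The case split into $n=3$, $4\le n\le 21$, $n\ge 22$ is also exactly how the paper organises the algebra.

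There is, however, one genuine error. You assert that the Lichnerowicz bound $\mu_0/\lambda\ge n/(n-1)$ together with the stability of $(M_0,g_0)$ for $\mathcal{F}_t$ ``forces $Q_+>0$ on its required range'', so that only $Q_-$ gives a nontrivial condition. That implication is false: the stability of $(M_0,g_0)$ concerns the Hessian of $\mathcal{F}_t$ on $M_0$ alone, which is an entirely different quadratic form in $\mu_0/\lambda$ from $Q_+$ (the latter comes from the Hessian on the product in the direction $fg_1$). In fact the paper shows, and the theorem statement records, that for $n\ge 5$ and $-\tfrac{11}{16n}>t>-\tfrac{n+1}{4n}$ the larger root of $p_1$ exceeds $1$, so the positivity of $Q_+$ is a \emph{genuine} additional constraint $\mu_0/\lambda>(-b_1+\sqrt{D})/(2((4t+1)n+1))$; this is precisely the meaning of the conditions written as $\mu_{[i]}/\lambda>\cdots$ for both $i$. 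Your later remark that one must ``simultaneously check the companion positivity of $Q_+$'' is correct, but your earlier claim that this check is automatic must be dropped.

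A smaller omission: in passing from the last summand of \eqref{eqn:decomposition2} to the two cases $fg_1$ ($f\in C^\infty M_0$) and $fg_0$ ($f\in C^\infty M_1$) you have silently discarded the ``pure'' conformal directions $fg_i$ with $f\in C^\infty M_i$. As in the proof of Theorem \ref{main2}, these are exactly the directions handled by the conformal part of the stability hypothesis on each $(M_i,g_i)$; you should say so explicitly.
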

\begin{proof} 
For transverse-traceless variations of $\mathcal{F}_t$, observe that for $h=\alpha_0\odot \alpha_1$ as in Lemma \ref{sv-F_t-general}(1), it is immediate combining Lemma \ref{sv-F_t-general}(1) and Theorem \ref{Ric:TT}, that there is a constant $C>0$ such that 
$$\langle (\nabla\mathcal{F}_t)'(h), h \rangle_{L^2} \geq C\|h\|^2.$$
For conformal variations of $\mathcal{F}_t$, with $h=fg_i$ as in Lemma \ref{sv-F_t-general}(2), one obtains
\Bea
\langle (\nabla\mathcal{F}_t)'(h), h \rangle_{L^2}=\frac{n((4t+1)n+1)}{2}\|\la f\|^2 - \lambda_i\frac{n((8t+1)n+2)}{2}\|df\|^2 + n\lambda^2 (n(2t-1)+4) \|f\|^2.
\Eea
Considering the quadratic polynomial $p_i(t,x)= ax^2+ b_ix+c$ with 
$$a= (4t+1)n+1, \ b_0= -((8t+1)n+2)=-b_1 \ \text{ and } c=2(2t-1)n+8,$$
it follows that $\langle (\nabla\mathcal{F}_t)'(h), h \rangle_{L^2}>0$ if and only if $p_i(t,x)>0$ for $x \geq \frac{\mu_{i+1}}{\lambda}$ ($\lambda>0$). For $t\leq \frac{-(n+1)}{4n},$ $a\leq 0$, and $p_0(t,x)\leq -nx-(3n-7)<0$ for all $x\geq 0$ (as $n\geq 3$). On the other hand, for $t>\frac{-(n+1)}{4n},$ the discriminant of $p_i(t,x)$ given by $D= 4nt(8n-7)+ (9n^2-20n-28)<0$ whenever $t< \frac{-(9n^2-20n-28)}{4n(8n-7)}$ (i.e. both $p_i(t,x)>0$ as $a>0$) which is possible only when $n\leq 21$. 

For $t>\frac{-(n+1)}{4n},$ $D\geq 0$ if and only if $t\geq \frac{-(9n^2-20n-28)}{4n(8n-7)}$ and $p_i(t,x)>0$ if $x>\frac{-b_i+ \sqrt{D}}{2a}$. Since $\frac{\mu_{0}}{\lambda}>1$ and $\frac{\mu_{1}}{\lambda}\geq 0$, it follows immediately that the positivity conditions $\frac{\mu_{[i]}}{\lambda} > \frac{-b_{[i+1]}+ \sqrt{D}}{2a}$ are non-trivial only when $\frac{-b_{1}+ \sqrt{D}}{2a}>1$ and $\frac{-b_{0}+ \sqrt{D}}{2a}>0$ respectively. Note that for $n\in \mathbb{N}$,
\Bea
 -\frac{(9n^2-20n-28)}{4n(8n-7)}
 \begin{cases} > -\frac{(n+1)}{4n}, \ \text{ if } n\leq 21;\\
 < -\frac{(n+1)}{4n}, \text{ otherwise }.
 \end{cases}
\Eea  
Considering all possibilities, one can summarize conditions of positivity of $\langle (\nabla\mathcal{F}_t)'(h), h \rangle_{L^2}$ as follows.
\begin{itemize}
\item $n=3, \text{ with } t>-\frac{1}{3}$: Clearly, for $t\geq 0$,
\Bea
p_0(t,x)&=& \frac{1}{2}((4x^2-5x+2)+12t(x-1)^2))>0, \text{ if }x>1, \text{ and}\\
p_1(t,x)&=& \frac{1}{2}((4x^2+5x+2)+12t(x+1)^2))>0, \text{ if }x\geq 0.
\Eea
Also, for $(-\frac{(9n^2-20n-28)}{4n(8n-7)}=) \frac{7}{204}>t>-\frac{1}{3}$, $D<0$ 
and $p_i(t,x)>0$ for each $i=0,1$. Hence in this case, $\langle (\nabla\mathcal{F}
_t)'(h), h \rangle_{L^2}>0$ whenever $t>-\frac{1}{3}.$ \\
 
\item $n= 4, \text{ with }t \geq -\frac{(9n^2-20n-28)}{4n(8n-7)} \big(>-\frac{(n+1)}{4n}\big)$ and $n \geq 5,\text{ with }t \geq -\frac{11}{16n}$:\\
Since in both the cases, $\frac{-b_{1}+ \sqrt{D}}{2a} \leq 1$ and $\frac{-b_{0}+ \sqrt{D}}{2a}>0$, the only non-trivial condition for positivity is $\frac{\mu_1}{\lambda} > \frac{-b_0+ \sqrt{D}}{2a}$.\\
 
\item $n\geq 5,$ together with $-\frac{11}{16n}>t \geq \text{max} \{-\frac{(n+1)}{4n}+\epsilon, -\frac{(9n^2-20n-28)}{4n(8n-7)}\}$ for some $\epsilon>0$: \\
In this case, both conditions for positivity are non-trivial (as $\frac{-b_{1}+ \sqrt{D}}{2a} > 1$ and $\frac{-b_{0}+ \sqrt{D}}{2a}>0$). \\
 
\item $n\geq 22,$ and $-\frac{11}{16n}>t > -\frac{(n+1)}{4n}$: Here also both the conditions for positivity are non-trivial. 
\end{itemize}

Combining all these possibilities above, it follows that $\langle (\nabla\mathcal{F}_t)'(h), h \rangle_{L^2}>0$ for $h=fg_i$, if and only if one of the conditions in the hypothesis holds. The final consequence is immediate following similar arguments as in Theorem \ref{main2}.
\end{proof}

\begin{rem}
Combining Theorem \ref{st-F_t-opposite} with Theorem 1.7 and Theorem 1.9 from \cite{GV}, it follows that a critical point $(M^{6},g)$ which is the product of closed Einstein manifolds $(M_0^{3},g_0)$ and $(M_1^{3},g_1)$ with Einstein constants of opposite signs, is stable for $\mathcal{F}_t$ provided $\frac{1}{3}>t>-\frac{1}{3}.$
Moreover, a critical point $(M^{2n},g)$ ($n\geq 4$) as the product of closed Einstein stable manifolds $(M_0^{n},g_0)$ and $(M_1^{n},g_1)$ with Einstein constants of opposite signs, is unstable whenever $t\leq -\frac{(n+1)}{4n}$ or $n\geq 22$ with $t< -\frac{(9n^2-20n-28)}{4n(8n-7)}$. On the other hand, a critical point $(M^{2n},g)$ ($n\geq 4$) as above is stable if both $(M_0^{n},g_0)$ and $(M_1^{n},g_1)$ are stable,  $t\geq -\frac{11}{16n}$ and the first eigen value of the Laplacian of $(M_1,g_1)$ (with $\lambda_1<0$) is sufficiently large.
\end{rem}

\subsection{Stability of $\mathcal{W}_2$}
Let $(M^n,g)$ be the product of a spherical space form $(M^{n_0}_0,g_0)$ and a compact hyperbolic space form $(M^{n_1}_1,g_1)$. Then $g$ is conformally flat. Hence $g$ is a global minima for $\mathcal{W}_2.$ Thus it would be interesting to see if $g$ is stable for $\mathcal{W}_2$. 
\begin{lem}\label{lW}
Let $(M^n,g)$ $(n=n_0+n_1)$ be the product of two compact space forms $(M^{n_0}_0,g_0)$ and $(M^{n_1}_1,g_1)$ such that one of the following conditions holds:
\begin{enumerate} 
\item $n_0,n_1\geq 3$ with $K_{g_0}=1=-K_{g_1}$,
\item $n_0\geq 3$, $n_{1}=1$, $K_{g_0}=\pm 1$, 
\end{enumerate}
(where $K_g$ denotes the sectional curvature of $g$). 
Then there exists $C>0$ such that for any TT-tensor $h$ on $M$,
$$\langle (\nabla\mathcal{W}_2)'(h), h \rangle_{L^2} \geq C\|h\|^2.$$
\end{lem}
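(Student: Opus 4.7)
The plan is to combine the identity
\begin{equation*}
\nabla \mathcal{W}_2(g)=\nabla\mathcal{R}(g)- \frac{4}{n-2}\left(\nabla\mathcal{R}ic(g)- \frac{1}{2(n-1)}\nabla\mathcal{S}(g)\right)
\end{equation*}
with the decomposition of TT-tensors on the product $(M,g)$ introduced at the start of \S 4,
\begin{equation*}
TT(M)=S^2_1M\oplus S^2_0M\oplus S^2_2M\oplus \mathbb{R}\cdot(n_1g_0-n_0g_1),
\end{equation*}
where in case (2) the summand $S^2_2M$ is absent since the closed $1$-manifold $(M_1,g_1)$ carries no non-zero TT-tensor. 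The first step is to verify, using Lemmas \ref{lem1}--\ref{lem4}, that this splitting is orthogonal for the Hessian $H_{\mathcal{W}_2}$; this rests on the block-diagonal structure of $R$, $r$, $s$ with respect to $TM=TM_0\oplus TM_1$ together with integration by parts, which force the cross-pairings $\langle(\nabla\mathcal{W}_2)'(h_i),h_j\rangle_{L^2}$ to vanish whenever $h_i,h_j$ belong to distinct summands. The proof then reduces to establishing strict positivity on each summand separately.

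Three of the four summands are relatively straightforward. For $h\in S^2_iM$ (a TT-tensor on the space form $(M_i,g_i)$ lifted to $M$), the product structure identifies $\langle(\nabla\mathcal{W}_2)'(h),h\rangle_{L^2}$ with $H_{\mathcal{W}_2}$ at $(M_i,g_i)$ acting on that TT-tensor, up to lower-order correction terms depending on $\lambda_{i+1}$ and $n_{i+1}$ from the curvature of the other factor; since $g_i$ is conformally flat, $\mathcal{W}_2(g_i)=0$ is a global minimum, and plugging Lemma \ref{lem4} together with the standard Lichnerowicz-type bound into the above identity yields a strict positive lower bound (the correction terms are absorbed into the leading eigenvalue contribution). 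On the one-dimensional summand $\mathbb{R}\cdot(n_1g_0-n_0g_1)$ the tensor is parallel, so every derivative term in Lemmas \ref{lem1}--\ref{lem3} disappears and only the algebraic contributions of Lemma \ref{lem4}(i)--(ii) survive; direct substitution shows the Hessian evaluates to a strictly positive multiple of $\|n_1g_0-n_0g_1\|^2$.

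The main work is on the mixed summand $S^2_0M$. Writing $h=\alpha_0\odot\alpha_1$ with co-closed $\alpha_i$ of unit $L^2$-norm, substitution of Lemmas \ref{lem1}(ii), \ref{lem2}(ii), \ref{lem3}(ii) and \ref{lem4}(iii) into the displayed identity produces, after inserting $\lambda_0=n_0-1$ and $\lambda_1=-(n_1-1)$ (or $\lambda_1=0$ in case (2)), an explicit quadratic polynomial $Q(x,y)$ in $x=\langle\Delta\alpha_0,\alpha_0\rangle_{L^2}$ and $y=\langle\Delta\alpha_1,\alpha_1\rangle_{L^2}$, with coefficients depending only on $n_0,n_1$. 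The Bochner identity (\ref{eqn-BW}) restricts the admissible range to $x\geq \mu_0\geq\lambda_0$ and $y\geq\mu_1\geq 0$, and one must show $Q$ is uniformly bounded below by a positive multiple of $\|\alpha_0\|^2\|\alpha_1\|^2=1$ on that range. This final algebraic verification is the principal obstacle: substantial cancellation occurs between the $\nabla\mathcal{R}$, $\nabla\mathcal{R}ic$ and $\nabla\mathcal{S}$ contributions, and the decisive positive constant term surviving in $Q$ originates in the $-4(n_0+n_1-2)$ of Lemma \ref{lem4}(iii) after combination with the negative prefactor in $\nabla\mathcal{R}$. Showing that this positive constant dominates the residual eigenvalue-linear terms is where the dimensional hypotheses $n_0,n_1\geq 3$ (respectively $n_0\geq 3$, $n_1=1$) enter decisively, and handling the two sign configurations $K_{g_0}=\pm 1$ of case (2) uniformly will require an extra case split in the final positivity check.
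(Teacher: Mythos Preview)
Your outline matches the paper's argument: both reduce via the $H_{\mathcal{W}_2}$-orthogonal splitting of TT-tensors and the identity \eqref{weyl2} to the mixed block $h=\alpha_0\odot\alpha_1$, then insert Lemmas~\ref{lem1}--\ref{lem4}. The paper in fact treats only this mixed block explicitly (appealing to the scheme of Theorem~\ref{Ric:TT} for the remaining summands), so your discussion of $S^2_iM$ and of the parallel direction $n_1g_0-n_0g_1$ is already more detailed than the published proof. Two points in your handling of the mixed case need sharpening, however.

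First, the expression produced by Lemmas~\ref{lem1}(ii), \ref{lem2}(ii), \ref{lem4}(iii) is not literally a polynomial in $x=\langle\Delta\alpha_0,\alpha_0\rangle_{L^2}$ and $y=\langle\Delta\alpha_1,\alpha_1\rangle_{L^2}$: the quantities $\|\Delta\alpha_0\|^2$ and $\|\Delta\alpha_1\|^2$ survive with positive coefficient $\tfrac{4(n-3)}{n-2}$ and have to be estimated from below separately (by $x^2,y^2$, or by a spectral bound) before one obtains an honest inequality in $(x,y)$.

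Second, and more substantively, the B\"ochner bound $x\ge\mu_0\ge\lambda_0=n_0-1$ that you cite is not what the paper actually uses in case~(1). The linear-in-$x$ coefficient of the combined expression is large and negative when $n_0\gg n_1$, and the paper absorbs it by invoking the sharper spectral fact $\langle\Delta\alpha_0,\alpha_0\rangle_{L^2}\ge 2(n_0-1)$ (hence $\|\Delta\alpha_0\|^2\ge 4(n_0-1)^2$) for co-closed $1$-forms on a spherical space form. After this substitution the paper reorganises everything into a strictly positive constant plus a manifestly positive multiple of $y$, and that closes the estimate; with only $x\ge n_0-1$ the same reorganisation does not go through uniformly in $(n_0,n_1)$. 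So the ``principal obstacle'' you correctly identify requires this extra spectral input on the positively curved factor, not merely the Weitzenb\"ock inequality.
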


\begin{proof}
Following Theorem \ref{Ric:TT}, it suffices to assume that $h=\alpha_0\odot \alpha_1$ as above.

Equation \ref{weyl2} implies that
\Bea
\langle (\nabla \mathcal{W}_2)'(h),h\rangle_{L^2}=\langle \nabla\mathcal{R}'(h)- \frac{4}{n-2}\{(\nabla\mathcal{R}ic)'(h)- \frac{1}{2(n-1)}(\nabla\mathcal{S})'(h)\},h\rangle_{L^2}.
\Eea 
Putting $\lambda_0=n_0-1$ and $\lambda_1=-(n_1-1)$ in Lemmas \ref{lem1},~\ref{lem2} and \ref{lem3}, one has
\Bea 
&& \langle (\nabla\mathcal{R}ic)'(h), h \rangle_{L^2} =\| \la \alpha_0 \|^2 + \|\la \alpha_1 \|^2 +2 \langle \la \alpha_0, \alpha_0  \rangle \langle \la \alpha_1, \alpha_1 \rangle - 4(n_0-1)(n_1-1)\\
&& +\frac{4}{n}((n_0(n_0-1)^2+n_1(n_1-1)^2))
-(3n_0-2n_1-1) \langle \la \alpha_0, \alpha_0 \rangle -(3n_0-2n_1+1)\langle \la \alpha_1, \alpha_1 \rangle.  
\Eea

Next, using \eqref{scal1}, it follows that 
\Bea
\langle (\nabla\mathcal{S})'(h), h \rangle_{L^2} =\frac{4}{n}(n_0-n_1)^2(n-1)^2-2(n_0-n_1)(n-1)(\langle \la \alpha_0, \alpha_0 \rangle_{L^2} +\langle \la \alpha_1, \alpha_1 \rangle_{L^2})
\Eea

Using Lemmas \ref{lem1} and \ref{lem4}, one has
\Bea
&&\langle (\nabla\mathcal{R})'(h), h \rangle_{L^2} = 4(\| \la \alpha_0 \|^2 + \|\la \alpha_1 \|^2 +2 \langle \la \alpha_0, \alpha_0  \rangle_{L^2} \langle \la \alpha_1, \alpha_1 \rangle_{L^2})-8(n_0-1)(n_1-1)+8(n-2)\\
&& +(5(n_1-n_0)-4)\langle \la \alpha_0, \alpha_0  \rangle_{L^2} + (5(n_1-n_0)+4)\langle \la \alpha_1, \alpha_1  \rangle_{L^2} + \frac{8}{n}(n_0(n_0-1)+n_1(n_1-1))
\Eea

Combining above equations, one obtains
\Bea
&&\langle (\nabla \mathcal{W}_2)'(h),h\rangle_{L^2}=
\left(\frac{4(n-3)}{n-2}\right)\left(\| \la \alpha_0 \|^2 + \|\la \alpha_1 \|^2 +2 \langle \la \alpha_0, \alpha_0 \rangle_{L^2} \langle \la \alpha_1, \alpha_1 \rangle_{L^2}\right)\\
&&+ \left(5(n_0-n_1)+\frac{4(n_0-2n_1+1)}{n-2}\right)\langle \la \alpha_0, \alpha_0\rangle_{L^2} +\left(5(n_1-n_0)+\frac{4(2n_0-n_1-1)}{n-2}\right)\langle \la \alpha_1, \alpha_1\rangle_{L^2}\\
&&+ \frac{8(n_1-n_0)^2(n-1)}{n(n-2)}+8(n-2)+ \frac{8(n_0(n_0-1)+n_1(n_1-1))}{n}\\
&&-\frac{8(n_1-1)(n_0-1)(n-4)}{n-2}-\frac{16(n_0(n_0-1)^2+n_1(n_1-1)^2)}{n(n-2)}
\Eea

Setting $n_1=1$, it is easy to see from the above inequality  that 
\Bea
\langle (\nabla \mathcal{W}_2)'(h),h\rangle_{L^2} >0.
\Eea

When both $n_i\geq 3,$ using similar arguments as in Lemma \ref{R:TT}, it follows that $\langle \la \alpha_0, \alpha_0  \rangle_{L^2} \geq 2(n_0-1)$, $\|\la \alpha_0\|^2 \geq 4(n_0-1)^2$ and the above equation reduces to
\bea
&&\langle (\nabla \mathcal{W}_2)'(h),h\rangle_{L^2} \geq 
\frac{16(n-3)(n_0-1)^2}{n-2}+10(n_1-n_0)(n_0-1)+\frac{8(n_0-2n_1+1)(n_0-1)}{n-2}\\ 
\nonumber && -\frac{8(n_0-1)(n_1-1)(n-4)}{n-2}+ \left(\frac{16(n-3)(n_0-1)}{n-2}+5(n_1-n_0)+\frac{4(2n_0-n_1-1)}{n-2}\right)\langle \la_H \alpha_1, \alpha_1 \rangle_{L^2} \\
&&\nonumber + \frac{8(n_1-n_0)^2(n-1)}{n(n-2)}+8(n-2)+ \frac{8(n_0(n_0-1)+n_1(n_1-1))}{n}
-\frac{16(n_0(n_0-1)^2+n_1(n_1-1)^2)}{n(n-2)} 
\eea
Consequently, it follows that
\Bea
&&\langle (\nabla \mathcal{W}_2)'(h),h\rangle_{L^2} >\frac{(6n-20)(n_0-1)^2}{n-2}+2(n_1-1)(n_0-1)\\ 
&&+ \frac{1}{n-2}\left(n_0(11n_0-6)+16(n_0-1)(n_1-3)+n_1(5n_1-14)\right)\langle \la_H \alpha_1, \alpha_1 \rangle_{L^2} >0. 
\Eea

\end{proof}
This leads to the following result.
\begin{thm} 
Let $(M,g)$ be the product of two closed non-Euclidean space forms whose sectional curvatures have absolute value $1$. Then $(M,g)$ is a stable critical point of the Riemannian functional $\mathcal{W}_{\frac{n}{2}}$ defined by 
$$\mathcal{W}_{\frac{n}{2}}(g)= \int_M |W_g|^{\frac{n}{2}} dv_g. $$
\end{thm}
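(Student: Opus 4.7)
\noindent\emph{Plan.} Under the hypothesis, $(M,g)$ is the product of a spherical factor $(M_0,g_0)$ with $K\equiv 1$ and a hyperbolic factor $(M_1,g_1)$ with $K\equiv -1$ (the only sub-case yielding a locally conformally flat product when both factors have dimension $\geq 3$, and precisely the case covered by Lemma \ref{lW}). Then $W(g)\equiv 0$, so $\mathcal{W}_{n/2}(g) = 0$ is the global minimum of this non-negative functional, making $g$ automatically a critical point. The content of the theorem is that $g$ is a strict local minimum along variations transverse to the conformal-diffeomorphism orbit; note that for $n \geq 6$ the classical second derivative $\frac{d^2}{dt^2}\mathcal{W}_{n/2}(g_t)|_{t=0}$ vanishes identically because the functional decays like $|t|^{n/2}$ along any curve through an LCF critical point.

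\medskip
First, I would eliminate the trivial directions. Using the decomposition in \eqref{eqn:decomposition}, diffeomorphism invariance kills the $\mathrm{Im}\,\delta_g^*$ component, and the conformal invariance of $\mathcal{W}_{n/2}$ (under $\tilde g = e^{2\varphi}g$ one has $|\widetilde W|^{n/2}_{\tilde g}\, dv_{\tilde g} = |W|^{n/2}_g\, dv_g$) kills the $C^\infty M \cdot g$ component. Next, for a TT-tensor $h$ and $g_t = g + th$, the identity $W(g_t) = t\, W'_g(h) + O(t^2)$ (valid because $W(g) = 0$) yields
\[
\mathcal{W}_{n/2}(g_t) \;=\; |t|^{n/2}\int_M |W'_g(h)|^{n/2}\, dv_g \;+\; O(t^{n/2+1}).
\]
Hence the strict local minimum property along the TT slice reduces to showing $W'_g(h) \not\equiv 0$ for every nonzero TT-tensor $h$.

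\medskip
This last ingredient is provided by Lemma \ref{lW}: since $W(g) = 0$, the second variation of $\mathcal{W}_2$ at $g$ along a TT-tensor collapses to
\[
\langle (\nabla \mathcal{W}_2)'_g(h), h\rangle_{L^2} \;=\; 2\int_M |W'_g(h)|^2\, dv_g,
\]
(the other terms in the expansion of $|W(g_t)|^2_{g_t}\, dv_{g_t}$ involve factors of $W(g)$ and thus vanish), and Lemma \ref{lW} bounds this from below by $C\|h\|^2$. Consequently $W'_g(h)$ is nonzero on a set of positive measure, so $\int_M |W'_g(h)|^{n/2}\, dv_g > 0$, and $\mathcal{W}_{n/2}(g_t) > 0$ for all small $t \neq 0$, establishing the claimed stability.

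\medskip
\textbf{Main obstacle.} The conceptual hurdle is that the Hessian of $\mathcal{W}_{n/2}$ at $g$ vanishes identically for $n \geq 6$, so stability cannot be interpreted via the quadratic inequality \eqref{eqn:hessian}; one must reformulate it as strict local minimization modulo the conformal-diffeomorphism symmetry group. The technical step is bridging Lemma \ref{lW}'s $L^2$ estimate on $W'_g(h)$ with the $L^{n/2}$ integrability needed by the expansion above: the qualitative passage is immediate, but obtaining a uniform quantitative lower bound $\int |W'_g(h)|^{n/2}\,dv_g \geq c\,\|h\|^{n/2}$ (rather than merely positivity pointwise in $h$) would require a genuine analytic estimate on the linear operator $W'_g$, and is the most delicate point should one want a quantitative strengthening of the conclusion.
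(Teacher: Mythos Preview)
Your approach is correct and genuinely different from the paper's. The paper's proof invokes the identity (from \cite{SM}) $\langle(\nabla\mathcal{W}_{n/2})'(h),h\rangle_{L^2}=\|W\|^{(n-4)/2}\langle(\nabla\mathcal{W}_2)'(h),h\rangle_{L^2}$ on TT-tensors, then concludes with the observation that conformally flat metrics are global minima for both functionals, so Lemma~\ref{lW} finishes. Read charitably, this says: the zero sets of $\mathcal{W}_2$ and $\mathcal{W}_{n/2}$ coincide (both equal the set of LCF metrics), and the positive-definite TT-Hessian of $\mathcal{W}_2$ from Lemma~\ref{lW} forces $g$ to be isolated in that common zero set modulo conformal diffeomorphisms. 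You instead confront head-on the degeneracy the paper elides---that the cited identity reads $0=0$ here and the Hessian condition \eqref{eqn:hessian} literally fails for $n\geq 6$---and replace it with the direct expansion $\mathcal{W}_{n/2}(g+th)=|t|^{n/2}\!\int|W'_g(h)|^{n/2}dv_g+o(|t|^{n/2})$, identifying the leading coefficient via $\langle(\nabla\mathcal{W}_2)'(h),h\rangle_{L^2}=2\|W'_g(h)\|_{L^2}^2$ at an LCF metric. Two small remarks: the remainder is only $o(|t|^{n/2})$, not $O(t^{n/2+1})$, because at points where $W'_g(h)$ vanishes the pointwise expansion degrades (dominated convergence still gives the $o$ estimate); and the ``delicate'' uniform lower bound you flag is in fact free from H\"older, since $\int|W'_g(h)|^2\leq\big(\int|W'_g(h)|^{n/2}\big)^{4/n}\,\mathrm{vol}(M)^{(n-4)/n}$ turns Lemma~\ref{lW}'s bound $\|W'_g(h)\|_{L^2}^2\geq\tfrac{C}{2}\|h\|^2$ directly into $\int|W'_g(h)|^{n/2}\geq c\,\|h\|^{n/2}$.
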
  
\begin{proof}
Since $\mathcal{W}_{\frac{n}{2}}$ is invariant under conformal deformations, we only need to check the positivity of $\langle (\nabla\mathcal{W}_{\frac{n}{2}})'(h),h\rangle_{L^2}$ for all TT-tensors $h.$ Also, it follows that (see Lemma 4 in \cite{SM}, for a proof) at a critical point $g$ of both $\mathcal{W}$ and $\mathcal{W}_{\frac{n}{2}}$, 
$$\langle (\nabla\mathcal{W}_{\frac{n}{2}})'(h),h\rangle_{L^2}= \|W\|^{\frac{n-4}{2}}\langle (\nabla\mathcal{W}_2)'(h),h\rangle_{L^2}, \ forall \ h\in tr^{-1}(0)\cap \dg^{-1}(0).$$
Since conformally flat metrics are global minima for both $\mathcal{W}_2$ and $\mathcal{W}_{\frac{n}{2}}$ the result follows from Lemma \ref{lW}. 
\end{proof}
\subsection{Stability of $\mathcal{R}$}
\begin{cor}\label{R:TT} 
Let $(M^{2n},g)$ ($n\geq 3$) be the product of a spherical space form $(M_0^n,g_0)$ and a compact hyperbolic space form $(M_1^n,g_1)$. Then there exists a constant $C>0$ such that for any TT-tensor $h$ on $M$,
\be \label{eqn:R-TT}
\langle (\nabla\mathcal{R})'(h), h \rangle_{L^2} \geq C\|h\|^2 
\ee
\end{cor}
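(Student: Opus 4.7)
The plan is to mirror the proof of Lemma \ref{lW}. Since $|\lambda_0|=|\lambda_1|=n-1$, $(M,g)$ is a critical metric of $\mathcal{R}$. First, I would use the orthogonal decomposition of TT-tensors
\[
S^2_1 M \oplus S^2_0 M \oplus S^2_2 M \oplus \mathbb{R}\cdot(g_0 - g_1)
\]
and verify (analogously to the proof of Theorem \ref{Ric:TT}, using Lemmas \ref{lem1}--\ref{lem4} to show that cross-terms vanish) that $H_\mathcal{R}$ is block-diagonal with respect to this decomposition. It then suffices to prove positivity on each summand.

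The central computation is the mixed case $h = \alpha_0 \odot \alpha_1$ with $\|\alpha_i\|=1$, which is dense in $S^2_0 M$. Since $h$ is pointwise trace-free and $|R|^2 = 4n(n-1)$ is constant, the $g$-terms of $\nabla\mathcal{R}$ contribute only $\bigl(\frac{1}{2}|R|^2 + (\frac{1}{n}-\frac{1}{2})\|R\|^2\bigr)\|h\|^2$ upon pairing with $h$. Using Lemmas \ref{lem1}(ii) and \ref{lem4}(iii) with $\lambda_0 = -\lambda_1 = n-1$, the remaining piece $2\langle(\delta^D d^D r)'(h),h\rangle_{L^2} - 2\langle(\Rt)'(h),h\rangle_{L^2}$ expands into a quadratic form in $\|\la\alpha_i\|^2$ and $\langle\la\alpha_i,\alpha_i\rangle$. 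B\"ochner-Weitzenb\"ock \eqref{eqn-BW}, together with the sharp eigenvalue bound $\langle\la\alpha_0,\alpha_0\rangle \geq 2(n-1)\|\alpha_0\|^2$ on $S^n$ (already invoked in Lemma \ref{lW}), should yield strict positivity.

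For the tangent summands $h \in S^2_i M$, the product structure reduces $H_\mathcal{R}^M(h, h)$ to a positive multiple of $H_\mathcal{R}^{M_i}(h, h)$, so the question becomes TT-stability of $\mathcal{R}$ on the space form $(M_i, g_i)$. I would handle this by exploiting $W = 0$ on space forms together with $s'(h) = 0$ for TT $h$ at Einstein metrics, so that $H_\mathcal{R}(h,h) \geq \frac{4}{n-2}H_{\mathcal{R}ic}(h,h)$ via the identity $\mathcal{R} = \mathcal{W}_2 + \frac{4}{n-2}\mathcal{R}ic - \frac{2}{(n-2)(n-1)}\mathcal{S}$; Ricci-stability of TT tensors on $S^n$ and $H^n$ is then classical, using $\Rt = 2K^2(n-1)g$ and $r'(h) = \frac{1}{2}\la_L h = \frac{1}{2}(D^*Dh + 2nKh)$ for constant curvature $K$. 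For the parallel direction $h = g_0 - g_1$, the identities $Dh = 0$, $r'(h) = 0$, and $\la_L h = 0$ (verified directly via $r\circ h = h\circ r = \Rc(h) = (n-1)g$) collapse $(\nabla\mathcal{R})'(h)$ to an explicit combination of $h$ and $(\Rt)'(h)$, whose pairing with $h$ is a scalar that can be checked positive.

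The principal obstacle is the mixed case: although $\lambda_0 + \lambda_1 = 0$ simplifies the coefficients involving the sum of Einstein constants in Lemma \ref{lem1}(ii), accurately combining the surviving terms with the $g$-contributions to extract a uniform positive lower bound on $\|h\|^2$ requires delicate algebraic bookkeeping and essential use of the sharp coclosed $1$-form eigenvalue estimate on $S^n$.
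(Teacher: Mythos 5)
Your plan for the mixed component $S^2_0M$ is viable --- it amounts to redoing the computation of Lemma \ref{lW} with $\mathcal{R}$ in place of $\mathcal{W}_2$, and with the sharp bound $\langle \la \alpha_0,\alpha_0\rangle_{L^2}\geq 2(n-1)$ on the spherical factor it does close --- but your handling of the summands tangent to the factors contains a genuine gap. The inequality $H_{\mathcal{R}}(h,h)\geq \frac{4}{n-2}H_{\mathcal{R}ic}(h,h)$ needs the Hessian of the subtracted term $\frac{2}{(n-2)(n-1)}\mathcal{S}$ to be nonpositive on TT-tensors, and $s'(h)=0$ does not give this: the second variation of $\int s^2\,dv$ also contains $2s\,s''(h,h)$ and volume terms, and at an Einstein metric one finds (exactly as in the first line of \eqref{scal1}) that $\langle (\nabla\mathcal{S})'(h),h\rangle_{L^2}=\frac{2s^2}{n}\|h\|^2-s\langle \la_L h,h\rangle_{L^2}$ for TT $h$. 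On the hyperbolic factor $s=-n(n-1)$ and $\la_L=D^*D-2n$ in constant curvature $-1$, so this equals $n(n-1)\left(\|Dh\|^2-2\|h\|^2\right)$, which is strictly positive for all but the lowest-frequency TT-tensors; the $\mathcal{S}$-term therefore enters with the wrong sign precisely on the factor where you need it most, and the comparison with $H_{\mathcal{R}ic}$ breaks down there. (On the sphere factor the inequality does hold, but because $\la_L\geq 2n$ there, not because $s'(h)=0$.) In addition, the reduction of $H_{\mathcal{R}}$ on $S^2_iM$ to "a positive multiple" of the factor Hessian ignores volume cross-terms and the change of normalizing constant from dimension $2n$ to $n$ (the paper is admittedly equally brief on this in Theorem \ref{Ric:TT}), and the positivity in the parallel direction $g_0-g_1$ is asserted rather than checked.

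The paper's own proof avoids all of this bookkeeping: by \eqref{Weyl1} applied in dimension $2n$, one has $\mathcal{R}=\mathcal{W}_2+\frac{2}{n-1}\mathcal{F}_t$ with $t=-\frac{1}{2(2n-1)}$, and the TT-positivity of $H_{\mathcal{R}}$ then follows immediately from Lemma \ref{lW} (for the $\mathcal{W}_2$-part) and Theorem \ref{st-F_t-opposite} together with Theorem \ref{Ric:TT} (for the $\mathcal{F}_t$-part, whose TT-Hessian reduces to that of $\mathcal{R}ic$ here since $s=0$). Replacing your $\mathcal{R}ic$-comparison on the factors by this decomposition --- or by a direct estimate of $H_{\mathcal{R}}$ on factor TT-tensors --- would repair the argument; as written, the hyperbolic-factor step fails.
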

\begin{proof}
Since on $(M^{2n},g)$, one can write (following \eqref{Weyl1}) 
$$\mathcal{R}=\mathcal{W}_2+  \frac{2}{n-1}\mathcal{F}_{\frac{-1}{2(2n-1)}},$$
the result is immediate from Theorem \ref{st-F_t-opposite} and Lemma \ref{lW}.
\end{proof}

\begin{cor}\label{lR}
Let $(M^{2n},g)$ be as in Corollary \ref{R:TT} and let $h=fg_i$ as in Lemma \ref{lem4}. Then  
\Bea 
\langle (\nabla \mathcal{R})'(h),h\rangle >0 \text{ if and only if} 
\begin{cases}
n \leq 4, \\
n\geq 5 \text{ and } \mu_i>\sqrt{(n-1)(n-4)}.
\end{cases}
\Eea
where $\mu_i$ is the first eigenvalue of the Laplacian of $(M_i,g_i)$ for $i=0,1$.
\end{cor}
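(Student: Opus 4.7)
The plan is to differentiate each term in
\begin{equation*}
\nabla\mathcal{R}(g)=2\dd d^{D}r-2\Rt+\tfrac{1}{2}|R|^2 g+\Bigl(\tfrac{2}{N}-\tfrac{1}{2}\Bigr)\|R\|^2 g,\qquad N=2n,
\end{equation*}
in the direction $h=fg_i$, normalize to unit volume, and apply the spectral theorem for $\la$. The scheme parallels the proof of Lemma \ref{lem6}.

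First, the assumption $\lambda_0=n-1=-\lambda_1$ kills the $\|df\|^2$-term in Lemma \ref{lem1}(i), so $\langle(\dd d^{D}r)'(fg_i),fg_i\rangle_{L^2}=n\|\la f\|^2$. Since each factor has constant sectional curvature $\pm 1$, $|R^i|^2=2n(n-1)$, and Lemma \ref{lem4}(i) yields $\langle(\Rt)'(fg_i),fg_i\rangle_{L^2}=-2n(n-1)\|f\|^2$. For the pointwise term $\tfrac{1}{2}|R|^2 g$, Lemma \ref{lem4}(ii) gives
\begin{equation*}
\tfrac{1}{2}\langle(|R|^2 g)'(fg_i),fg_i\rangle_{L^2}=\tfrac{n}{2}\bigl(|R|^2-2|R^i|^2\bigr)\|f\|^2,
\end{equation*}
which vanishes because $|R|^2=|R^0|^2+|R^1|^2=2|R^i|^2$. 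For the Lagrange-multiplier part $(\tfrac{2}{N}-\tfrac{1}{2})\|R\|^2 g$, the constant-volume constraint $\int_M f\,dv_g=0$ forces $(\|R\|^2)'(fg_i)=0$ (differentiating under the integral sign using Lemma \ref{lem4}(ii)); only the piece obtained by differentiating $g$ survives, and plugging in $\tfrac{2}{N}-\tfrac{1}{2}=-\tfrac{n-2}{2n}$ together with $\|R\|^2=4n(n-1)$ produces a contribution of $-2n(n-1)(n-2)\|f\|^2$.

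Summing the four contributions and factoring,
\begin{equation*}
\langle(\nabla\mathcal{R})'(fg_i),fg_i\rangle_{L^2}=2n\|\la f\|^2+4n(n-1)\|f\|^2-2n(n-1)(n-2)\|f\|^2=2n\bigl[\|\la f\|^2-(n-1)(n-4)\|f\|^2\bigr].
\end{equation*}
Decomposing $f$ spectrally on the factor on which it is defined, one has $\|\la f\|^2\geq \mu_i^2\|f\|^2$, with equality realized on a first nonzero eigenfunction. When $n\leq 4$ the coefficient $(n-1)(n-4)\leq 0$ and positivity is automatic; when $n\geq 5$ positivity is equivalent to $\mu_i^2>(n-1)(n-4)$, i.e.\ $\mu_i>\sqrt{(n-1)(n-4)}$.

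The main obstacle is managing the Lagrange-multiplier term: one must check that $(\|R\|^2)'(fg_i)$ vanishes under the volume constraint and that its residual ``$\|R\|^2 h$'' contribution combines precisely with the $\Rt$ piece to cancel the pointwise $|R|^2$-terms and produce the clean scalar coefficient $(n-1)(n-4)$. This cancellation is a reflection of the symmetric structure $n_0=n_1=n$ and $\lambda_0=-\lambda_1=n-1$, without which neither $|R|^2=2|R^i|^2$ nor the numerical match between the $\Rt$ and Lagrange-multiplier contributions would hold.
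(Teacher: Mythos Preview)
Your proof is correct and follows essentially the same route as the paper: differentiate each term of $\nabla\mathcal{R}$ using Lemmas~\ref{lem1}(i) and~\ref{lem4}(i)--(ii), sum to obtain $2n\bigl[\|\la f\|^2-(n-1)(n-4)\|f\|^2\bigr]$, and conclude via the spectral decomposition of $f$. Your treatment is in fact more explicit than the paper's in isolating the Lagrange-multiplier contribution and the cancellation $|R|^2=2|R^i|^2$, but the underlying computation is the same.
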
 
\begin{proof}
It follows from a combination of Lemmas \ref{lem1} and \ref{lem4}, that
\Bea 
\langle (\nabla \mathcal{R})'(h),h\rangle_{L^2} &=& 2\langle ({\delta}^Dd^Dr)'(fg_i)-\langle (\Rt)'(fg_i),\frac{1}{2} (|R|^2)'(fg_i), fg_i \rangle_{L^2} + 4n(n-1)\|f\|^2\\
&=& 2n\|\la f\|^2 -2n(n-1)(n-4)\|f\|^2
\Eea
Now, one observes that 
\Bea 
\langle (\nabla \mathcal{R})'(h),h\rangle_{L^2} >0 \text{ if and only if }\mu_i^2-(n-1)(n-4)>0
\Eea
and the estimate follows.
\end{proof}
Consequently we have the following theorem using \cite{BM} and Corollaries \ref{R:TT}, \ref{lR} above.
\begin{thm}\label{stabilityR} Let $(M,g)$ be the product of $S^n$ and a compact hyperbolic manifold $H^n$. Then $(M,g)$ is stable for $\mathcal{R}$ if and only if one of the following conditions holds.
\Bea
(i) \ n\in \{3, 4\} \, \  \ (ii) \ n\geq 5 \text{ and }\mu>\sqrt{(n-1)(n-4)}.
\Eea
where $\mu$ is the first eigenvalue of the Laplacian of $H^n$.
\end{thm}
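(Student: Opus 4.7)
The plan is to combine Corollaries \ref{R:TT} and \ref{lR} with the orthogonal decomposition \eqref{eqn:decomposition2}, in exactly the spirit of the proof of Theorem \ref{main2}. Since $\mathcal{R}$ is diffeomorphism-invariant and we have restricted to unit-volume metrics, the Hessian $H$ vanishes on $\mathrm{Im}\,\delta_g^*$ and on the trivial direction $\mathbb{R}g$, so it suffices to test positivity on TT-tensors and on the traceless conformal variations of the form $f_0 g_0 + f_1 g_1$ with $\int f_i\,dv_{g_i}=0$.

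First I would invoke Corollary \ref{R:TT} on the TT-summand $S^2_1M\oplus S^2_0M\oplus S^2_2M$: this gives a uniform lower bound $\langle(\nabla\mathcal{R})'(h),h\rangle_{L^2}\geq C\|h\|^2$ for every TT-tensor $h$, with no restriction on $n$ or on $\mu$. Thus TT-variations never obstruct stability for $(S^n\times H^n,g)$.

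Next I would analyse the conformal summand. Arguing as in Theorem \ref{main2}, the cross-term $H(f_0 g_0,f_1 g_1)$ vanishes by a direct computation using the variational identities of Section 3, so the Hessian decouples into its restrictions to $\{fg_0 : f\in C^\infty(H^n),\,\int f=0\}$ and $\{fg_1 : f\in C^\infty(S^n),\,\int f=0\}$. Corollary \ref{lR} then tells me that on each factor positivity holds iff $n\leq 4$, or else the first Laplace eigenvalue of the relevant factor exceeds $\sqrt{(n-1)(n-4)}$. On $S^n$ the first Laplace eigenvalue equals $n$, and $n^2>(n-1)(n-4)=n^2-5n+4$ automatically for $n\geq 3$, so variations of the form $fg_1$ contribute no obstruction. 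The sole binding condition therefore comes from the hyperbolic factor, namely $\mu>\sqrt{(n-1)(n-4)}$ when $n\geq 5$.

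For the converse, when $n\geq 5$ and $\mu\leq\sqrt{(n-1)(n-4)}$, choosing $f$ to be a first eigenfunction of $\Delta_{H^n}$ and setting $h=fg_0$ gives $\langle(\nabla\mathcal{R})'(h),h\rangle_{L^2}\leq 0$ by Corollary \ref{lR}, producing an explicit unstable direction. The main obstacle --- the lengthy second-variation computations on mixed one-forms and the Weyl-decomposition rearrangement \eqref{weyl2} --- has already been absorbed into the two corollaries; what remains is bookkeeping, namely confirming that $H$ is block-diagonal with respect to \eqref{eqn:decomposition2} and that $H(f_0g_0,f_1g_1)=0$, both of which follow from the same $\mathcal{R}ic$-type orthogonality arguments already used in Theorem \ref{main2}.
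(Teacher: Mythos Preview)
Your proposal follows the same route as the paper. The paper's proof is a single sentence invoking Corollaries \ref{R:TT} and \ref{lR} together with \cite{BM}; you use the same two corollaries and, in place of the \cite{BM} citation, supply the block-diagonality and decoupling argument borrowed from Theorem \ref{main2}, plus the easy check that the first Laplace eigenvalue of $S^n$ automatically clears the threshold $\sqrt{(n-1)(n-4)}$. This is the paper's approach, only spelled out in more detail.
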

Recently, stability of rank 1 symmetric spaces as critical points of $\mathcal{R}$ and $\mathcal{W}_{\frac{n}{2}}$ has been established in \cite{SM}. The behaviour of irreducible symmetric spaces of higher rank as critical metrics of $\mathcal{R}$ is not completely understood, but some unstable critical points of the functional $\mathcal{R}_{\frac{n}{2}}$ have been pointed out in \cite{BM}. 
The above theorem implies that if the first eigenvalue of the laplacian of a hyperbolic manifold $H^n$ is sufficiently small and $n>5$ then its product with $S^n$ is unstable for $\mathcal{R}.$

\end{document}